\newcommand{\R}{\mathbb{R}}
\newcommand{\N}{\mathbb{N}}
\newtheorem{theorem}{Theorem}[section]
\newtheorem{corollary}[theorem]{Corollary}
\newtheorem{definition}[theorem]{Definition}
\newtheorem{lemma}[theorem]{Lemma}
\newtheorem{proposition}[theorem]{Proposition}
\newtheorem{remark}[theorem]{Remark}
\begin{document}
\title{Compactness of Dirac-Einstein spin manifolds and horizontal deformations.}
\author{Ali Maalaoui$^{(1)}$ \& Vittorio Martino$^{(2)}$}
\addtocounter{footnote}{1}
\footnotetext{Department of Mathematics, Clark University, 950 Main Street, Worcester, MA 01610, USA. E-mail address:
{\tt{amaalaoui@clarku.edu}}}
\addtocounter{footnote}{1}
\footnotetext{Dipartimento di Matematica, Universit\`a di Bologna. E-mail address:
{\tt{vittorio.martino3@unibo.it}}}

\date{}
\maketitle

\vspace{5mm}

{\noindent\bf Abstract} {\small In this paper we consider the Hilbert-Einstein-Dirac functional, whose critical points are pairs, metrics-spinors, that satisfy a system coupling the Riemannian and the spinorial part. Under some assumptions, on the sign of the scalar curvature and the diameter, we prove a compactness result for this class of pairs, in dimension three and four. This can be seen as the equivalent of the study of compactness of sequences of Einstein manifolds as in \cite{And0,N}. Indeed, we study the compactness of sequences of critical points of the Hilbert-Einstein-Dirac functional which is an extension of the Hilbert-Einstein functional having Einstein manifolds as critical points. Moreover we will study the second variation of the energy, characterizing the horizontal deformations for which the second variation vanishes. Finally we will exhibit some explicit examples. }

\vspace{5mm}

\noindent
{\small Keywords: Einstein-Dirac, Hausdorff Compactness, Second Variation }

\vspace{5mm}

\noindent
{\small 2010 MSC. Primary: 53C21, 53C23.  Secondary: 53C27, 58E30}

\vspace{5mm}


\section{Introduction and main results}

\noindent
Let $M$ be a closed (compact, without boundary) manifold of dimension $n$. We recall the energy functional $E$ that couples the gravity with fermionic interactions:
\begin{equation}\label{hilbertdiracenergy}
E(g,\psi)=\int_{M}R_{g}+\langle D_{g}\psi,\psi\rangle-\lambda|\psi|^{2} dv_{g} \; , \qquad \lambda \in \R
\end{equation}
where $g$ is a Riemannian metric on $M$, $\psi$ is a spinor in the spin bundle $\Sigma M$ on $M$, $R_g$ is the scalar curvature, $D_{g}$ is the Dirac operator and $\langle \cdot, \cdot \rangle$ is the compatible Hermitian metric on $\Sigma M$; we will give the precise definitions in the next section.\\
The functional $E$ generalizes the classical Hilbert-Einstein functional and it is invariant under the group of diffeomorphisms of $M$ as well; we address the reader to \cite{Belg, Fin, Kim} for a detailed exposition on this topic. In particular, when restricted to a fixed conformal class, this functional gives rise to equations of Yamabe type, see for instance \cite{guimaamar}, \cite{Maalaoui2013}, \cite{MaaVitt}, \cite{MV2}, \cite{MV3} and the references therein. We point out here the work in \cite{AWW}, where a different energy functional coupling spinors and metric was introduced and studied; its critical points are very restrictive since the spinor component is either a parallel spinor or a Killing spinor (under the constraint). In particular, this shrinks the set of critical points and provides flexibility in the study of their deformations.\\
In this paper we will consider the case with $\lambda >0$, which will imply that the scalar curvature is positive, as we will see. The critical points of this functional are solutions of the system (see next section)
\begin{equation}\label{eq1}
\left\{\begin{array}{ll}
Ric_{g}-\displaystyle\frac{R_{g}}{2}g=T^{g,\psi}\\
\\
D_{g}\psi=\lambda \psi
\end{array}
\right.
\end{equation}
where $T^{g,\psi}$ denotes the energy-momentum tensor
\begin{equation}\label{energy-momentum tensor}
 T^{g,\psi}(X,Y)=-\frac{1}{4}\langle X\cdot\nabla_Y \psi + Y\cdot\nabla_X \psi, \psi \rangle \; ,
\end{equation}
here $\cdot$ and $\nabla$ denote the Clifford multiplication and the metric connection extended to the spin bundle $\Sigma M$.\\
A non-trivial spinor $\psi$  is called an Einstein spinor for the eigenvalue $\lambda$ if it is a solution of the previous system; in addition if $n\geq 3$, by contracting the first equation in (\ref{eq1}), it holds
\begin{equation}\label{Rglambda}
R_{g}=\frac{\lambda}{n-2}|\psi|^{2}_{g} \; .
\end{equation}
We will consider the set
\begin{equation}\label{setofmetrics}
\mathcal{E}(D,c,K)=\{(g,\psi) \in Crit_{c}(E); \quad diam(M,g)\leq D, -\Delta_g R_g \geq -K R_g\} \; ,
\end{equation}
where the constant $c$ introduced in the previous definition is meant to prevent the collapsing of the manifold (see formula (\ref{volumeboundedbelow}) in the sequel). The constant $K>0$ will be useful for bounding the gradient of the spinor $\psi$. Besides their technical use, we recall that in the literature regarding the study of compactness of sets of Riemannian manifolds, several assumptions were considered. For instance, in the classical theory of Gromov-Cheeger, a bound on the curvature tensor is necessary and geometrically meaningful (since without such a bound, conical singularities can form where the curvature blows up), as well as a bound on the diameter an a lower bound on the volume in order to avoid collapsing. These type of assumptions were weakened in several situations, one can check for instance \cite{And0,And1, Ch}. In our setting, the bound $c$, on the energy, is natural from a variational point of view, in the spirit of studying a moduli space of metrics. The diameter bound is again geometrically natural to avoid a scaling non-compactness. The bound $K$ on the Laplacian of the curvature is actually technical.
Notice that if one assumes that the metrics are of constant scalar curvature, then we can drop the dependence on $K$. In fact, one can replace this "analytic" constraint by a geometric one by assuming that the $Q$-curvature is bounded from below.  Indeed, the $Q$-curvature $Q$ has the following formula:
$$Q=-\frac{1}{2(n-1)}\Delta_{g}R_{g}-\frac{2}{(n-2)^{2}}|Ric|^{2}+\frac{n^{3}-4n^{2}+16n-16}{8(n-1)^{2}(n-2)^{2}}R_{g}^{2}.$$
Hence, if $Q$ is bounded from below, then $-\Delta_{g}R_{g}$ is also bounded below in terms of $R^{2}_{g}$ and this is enough for our analysis. In fact, if $(g,\psi)\in Crit_{c}(E)$, then the lower bound on the $Q$-curvature is equivalent to our condition $-\Delta_{g} R_{g}\geq -K R_{g}$.

\noindent
We want to point out that the study of compactness and convergence of manifolds with underlying spinors was investigated in \cite{KL,L}, also some cases of collapsing along the limit were studied in \cite{R1,R2}. In our case, the functional provides more control on the spinorial component and this allow us to keep track of its limit.

\noindent
Our first theorem is a compactness (up to bubbling) type result, in terms of convergence in the Gromov-Hausdorff sense, in the spirit of works \cite{And1,And0,BKN,N}. We have the following:
\begin{theorem}\label{compactnessresults}
Let $n=3$, then the space $\mathcal{E}(D,c,K)$ is compact in the topology induced by the Hausdorff distance. That is, if $(g_{k},\psi_{k})\in \mathcal{E}(D,c,K)$ then there exists a subsequence again denoted by $(g_{k},\psi_{k})$ that converges in $C^{\ell,\alpha}(M)$ to $(g_{\infty},\psi_{\infty})$ for all $\ell>0$ and $0<\alpha<1$ and $\psi_{\infty}$ is an Einstein spinor on $(M,g_{\infty})$.

\noindent
Let $n=4$, then there exist a compact orbifold $(M_{\infty},g_{\infty})$ with a finite set $S$ of orbifold singularities, a spinor $\psi_{\infty}\in \Sigma (M_{\infty}\setminus S)$ and a sequence of $C^{\infty}$ embeddings $F_{k}:(M_{\infty}\setminus S,g_{\infty}) \to (M,g_{k})$, for $k$ large enough, such that
\begin{itemize}
\item $((F_{k})^{*}g_{k},(F_{k})^{*}\psi_{k})$ converges uniformly on compact subsets in the $C^{\ell,\alpha}$ topology on $M_{\infty}\setminus S$, to $(g_{\infty},\psi_{\infty})$ for every $\ell>0$ and $0<\alpha<1$ and $\psi_{\infty}$ is an Einstein spinor on $(M_{\infty}\setminus S,g_{\infty})$.
\item For each $p_{i} \in S$, there exist a sequence of real numbers $r_{k}$ and a sequence of points $x_{k}\in M$ such that $(M,r_{k}g_{k},x_{k})$ converges in the pointed Gromov-Hausdorff sense to $(Y_{i},\overline{g}_{i}, x_{\infty})$, where $(Y_{i},\overline{g}_{i})$ is a Ricci flat non-flat manifold. That is, there exists a sequence of smooth diffeomorphisms $H_{k}: (B(p_{i},r),\overline{g}_{i}) \to (M, r_{k}g_{k})$ so that $(H_{k}^{*}(r_{k}g_{k}))$ converges in the $C^{\ell,\alpha}$ topology in $B(p_{i},r)\subset Y_{i}$, to $\overline{g}_{i}$ for every $r>0$, $\ell>0$ and $0<\alpha<1$.
\item Moreover, there exists a parallel spinor $\psi_{i,\infty}\in \Gamma(\Sigma Y_{i})$ such that $H_{k}^{*}\psi_{k}$ converges in the $C^{\ell,\alpha}$ topology in $B(p_{i},r)$, to the spinor $\psi_{i,\infty}$ for every $\ell>0$ and $0<\alpha<1$ and
$$\liminf_{k\to \infty}\int_{M}|Rm_{g_{k}}|^{2}dv_{g_{k}}\geq \int_{M}|Rm_{g_{\infty}}|^{2}dv_{g_{\infty}}+\sum_{i=1}^{|S|}\int_{Y_{i}}|Rm_{\overline{g}_{i}}|^{2}dv_{\overline{g}_{i}}.$$
\end{itemize}
\end{theorem}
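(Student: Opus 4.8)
The plan is to adapt the Anderson and Bando--Kasue--Nakajima compactness program for Einstein manifolds to the coupled system (\ref{eq1}), treating the spinorial terms as controlled perturbations of the purely metric theory. The first task is to extract uniform a priori estimates from the three hypotheses defining $\mathcal{E}(D,c,K)$. Using the Schr\"odinger--Lichnerowicz formula $D_g^2=\nabla^*\nabla+\tfrac{R_g}{4}$ together with $D_g\psi=\lambda\psi$, I would first obtain an $L^\infty$ bound on $|\psi|$ from the energy bound $c$ via a Moser iteration, which by (\ref{Rglambda}) gives a uniform bound on $R_g$. The hypothesis $-\Delta_g R_g\ge -KR_g$, rewritten through (\ref{Rglambda}) as a differential inequality for $|\psi|^2$ and combined with a Bochner identity of the form $\tfrac12\Delta_g|\psi|^2=|\nabla\psi|^2-(\lambda^2-\tfrac{R_g}{4})|\psi|^2$, then controls $\nabla\psi$; this is precisely the role of $K$. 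Feeding these bounds into the energy--momentum tensor (\ref{energy-momentum tensor}) and into the first equation of (\ref{eq1}), which reads $Ric_g=T^{g,\psi}+\tfrac{R_g}{2}g$, yields a uniform bound on the Ricci curvature.

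The core technical step is an $\varepsilon$-regularity theorem: there exists $\varepsilon_0>0$ such that if $\int_{B_r(x)}|Rm|^{n/2}\,dv_g<\varepsilon_0$ then $\sup_{B_{r/2}(x)}|Rm|\le C r^{-2}\big(\int_{B_r(x)}|Rm|^{n/2}\,dv_g\big)^{2/n}$. I would derive the Simons--Bochner elliptic inequality $\Delta_g|Rm|\ge -c|Rm|^2-(\text{spinor terms})$ from the second Bianchi identity, where the inhomogeneous spinor terms are absorbed using the a priori bounds of the previous step, and then run a Moser iteration; the volume lower bound encoded in $c$ supplies the uniform Sobolev constant and rules out collapse.

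In dimension $n=3$ there is no bubbling. Since the Weyl tensor vanishes, $|Rm|$ is algebraically controlled by $|Ric|$, so the a priori estimates already give a uniform $L^\infty$ bound on the full curvature. Together with the diameter bound $D$ and the non-collapsing from $c$, Cheeger's lemma produces a uniform lower bound on the injectivity radius, and the Cheeger--Gromov theorem yields $C^{1,\alpha}$ subconvergence of the metrics. In harmonic coordinates the first equation of (\ref{eq1}) is elliptic in $g$ and the Dirac equation is elliptic in $\psi$; bootstrapping this elliptic system upgrades the convergence to $C^{\ell,\alpha}$ for all $\ell$, and the limit $(g_\infty,\psi_\infty)$ solves (\ref{eq1}), so $\psi_\infty$ is an Einstein spinor.

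In dimension $n=4$, the scale-invariant quantity $\int_M|Rm|^2\,dv_g$ is bounded along the sequence via Gauss--Bonnet (here $M$ is fixed, and the Ricci and volume are already controlled, the latter by Bishop--Gromov from the diameter and Ricci bounds), so curvature can concentrate only at a finite set $S$, defined as the points where $\limsup_k\int_{B_r(x)}|Rm|^2\,dv_{g_k}\ge\varepsilon_0$ for all $r>0$. Away from $S$, the $\varepsilon$-regularity theorem gives local curvature bounds and the same bootstrap yields smooth convergence to $(g_\infty,\psi_\infty)$ on $M_\infty\setminus S$. At each $p_i\in S$ I would choose scales $r_k\to\infty$ normalizing the curvature and extract, by the non-collapsed Cheeger--Gromov argument applied to $(M,r_kg_k,x_k)$, a complete pointed limit $(Y_i,\overline g_i,x_\infty)$; finite total energy and the neck structure force it to be ALE, and the normalization makes it non-flat. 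Under this blow-up $R_{r_kg_k}=r_k^{-1}R_{g_k}\to0$, so $Y_i$ is Ricci-flat, and since the Dirac eigenvalue rescales to zero the suitably renormalized limit spinor $\psi_{i,\infty}$ satisfies $D_{\overline g_i}\psi_{i,\infty}=0$, whence Lichnerowicz on the Ricci-flat $Y_i$ forces it to be parallel; the final energy inequality follows from the additivity of $\int|Rm|^2$ under the blow-up decomposition. The main obstacle is twofold: establishing the $\varepsilon$-regularity uniformly in the presence of the spinor coupling, where one must verify that the terms coming from $\psi$ and $\nabla\psi$ in the Simons-type inequality do not spoil the Moser iteration (which hinges essentially on the $K$-hypothesis), and, in dimension four, controlling the spinor under the blow-up so that it survives to a nontrivial parallel spinor on the bubble while showing that no curvature energy escapes into the neck regions.
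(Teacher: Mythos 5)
Your overall skeleton matches the paper's proof (a priori bounds on $\psi$, $\nabla\psi$, $Ric$; $\varepsilon$-regularity; good/bad decomposition and Anderson orbifold convergence in dimension four; blow-up at the singular scale $r_{k}=\sup|Rm_{g_{k}}|$; Ricci-flat non-flat bubbles carrying a limit spinor), but two steps as you state them would fail. First, in the $\varepsilon$-regularity theorem the spinor terms are genuinely \emph{not} absorbable by the step-one bounds. Since $Ric=T^{g,\psi}+\frac{R}{2}g$, the term $\nabla^{2}Ric$ in Hamilton's identity $\Delta Rm=Rm\ast Rm+\nabla^{2}Ric$ carries contributions of the schematic form $\nabla^{3}\psi\ast\psi$ and $\nabla^{2}\psi\ast\nabla\psi$, i.e.\ \emph{third} derivatives of the spinor, while the $K$-hypothesis only yields $\|\psi\|_{\infty}$ and $\|\nabla\psi\|_{\infty}$. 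So your claim that the inhomogeneous terms in the Simons-type inequality ``are absorbed using the a priori bounds of the previous step'' (and that this ``hinges essentially on the $K$-hypothesis'') names the obstacle without resolving it: the paper devotes Propositions \ref{proprim} and \ref{propreg} to exactly this point, running a separate elliptic bootstrap --- Calder\'on--Zygmund estimates on manifolds \cite{CZ} for $\nabla^{2}\psi$ and $\nabla^{3}\psi$, Sobolev embedding, and the $L^{2}$-smallness of $Rm$ --- to get $H=\nabla^{2}Ric$ small in $L^{4}$ before Lemma \ref{classicallemma} can be applied with $u=|Rm|^{2}+1$ and $f=C(|Rm|+|H|)$. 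Relatedly, your very first step is circular as ordered: Moser iteration for $\|\psi\|_{\infty}$ from the energy level $c$ presupposes a uniform Sobolev constant, which in the paper is only obtained \emph{after} the Ricci bound (via Croke \cite{Cr} and the diameter), and that Ricci bound itself needs the spinor bounds. The paper avoids this by a pointwise maximum principle on $f=\frac{1}{2}|\psi|^{2}$, giving $R\leq 4\lambda^{2}$ at the maximum and then $|\psi|^{2}\leq 4(n-2)\lambda$ from (\ref{Rglambda}), with no Sobolev input; also, the role of $c$ is only the volume lower bound (\ref{volumeboundedbelow}), not the source of the Sobolev constant.

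Second, on the bubble your conclusion that ``Lichnerowicz on the Ricci-flat $Y_{i}$ forces $\psi_{i,\infty}$ to be parallel'' is not a proof on a complete noncompact space: $D^{2}\psi_{i,\infty}=0$ with $R_{\overline{g}_{i}}=0$ gives $\nabla^{*}\nabla\psi_{i,\infty}=0$, but the integration by parts needed to conclude $\nabla\psi_{i,\infty}=0$ requires decay that you have not established, and no Liouville argument rescues it from boundedness alone ($|\psi|^{2}$ is bounded subharmonic, and on $\R^{n}$, $n\geq3$, bounded subharmonic functions need not be constant). The paper closes this gap in two steps: \cite[Lemma~5.1]{KN} shows the harmonic spinor on the ALE space is asymptotic to a parallel spinor at infinity, and \cite[Theorem~1.5]{BKN} shows the mass of $(Y_{i},\overline{g}_{i})$ vanishes, so the Witten-type boundary term in the integration by parts vanishes and the spinor is parallel. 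Some version of this asymptotics-plus-mass argument is indispensable; without it the third bullet of the theorem is unproved. The remaining components of your proposal (the $n=3$ case via the vanishing Weyl tensor and Cheeger--Gromov, the Gauss--Bonnet $L^{2}$ curvature bound, the characterization of $S$, the rescaled system in which the eigenvalue becomes $\lambda/\sqrt{r_{k}}$, and the additivity of curvature energy) do track the paper's argument faithfully.
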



\noindent
Notice that the set $Crit_{c}(E)$ is invariant under the action of the diffeomorphism group and the previous theorem is dedicated to the study of its compactness. Hence, in order to have a better understanding of this set, we would like to know if it has more structure. This is again similar to the study of the moduli space of Einstein manifolds and Einstein deformations as in \cite{And2,And3,Kro}. Hence, we would like to know if transversally to the diffeomorphisms' action, $Crit_{c}(E)$ is a finite dimensional manifold. If this is the case, then combined with the previous theorem, we could have the structure of a compact manifold with boundary. In general the study of moduli spaces is not an easy task because of the action of the diffeomorphisms' group (we refer the reader to \cite{Dai,Wang, AWW}). Our study here is the first step in analyzing the moduli space. Our second result concerns the Dirac-Einstein deformations, which are defined by the vanishing of the second variation of the energy functional at critical points. These are the deformations that does not affect the set of critical points. In particular we characterize the horizontal deformations, namely deformations of the metric at a fixed spinor. This requires actually a carefully procedure since the spin bundle varies with the metric as well: tracking such variations has been done by Bourguignon-Gauduchon in \cite{BG} (see also \cite{AWW}). We will recall it in the sequel; here we first determine the second variation of the functional $E$ at a critical point $(g,\psi)$:

\begin{theorem}\label{secondvariationtransverse} Let $(g+th, \psi +t\varphi)$ denote a path of deformations, with $t\in (-\varepsilon,\varepsilon)$ for a small $\varepsilon>0$, $h$ being transverse to the diffeomorphism action, that is $\delta h=0$, where $(\delta h)_{i}=-(div h)_{i}=-\nabla^{j}h_{ij}$. If $(g,\psi)$ is a critical point of $E$, then we have

\begin{align}
\nabla^{2}E(g,\psi)[(h,\varphi),(h,\varphi)]&=\int_{M}\frac{1}{2}\langle \Delta_{L}h+\nabla\nabla tr(h),h\rangle \notag\\
&+\frac{1}{2}\left( -\Delta tr(h)-\langle Ric_g , h\rangle\right) tr(h) +\frac{R_g}{2}|h|^{2}\notag\\
&+\frac{1}{2}\langle T^{g,\psi},h\rangle tr(h)+\frac{1}{2}\langle h\times T^{g,\psi},h\rangle+\frac{1}{2}\langle\nabla tr(h)\cdot \psi, \varphi \rangle\notag\\
&+\langle \mathcal{D}^{h}\varphi,\psi\rangle+2\langle D_g\varphi-\lambda \varphi,\varphi \rangle dv \notag
\end{align}

\noindent
where $\Delta_{L}$ is the Lichnerowicz Laplacian acting on symmetric 2-tensors, $\nabla \nabla tr(h)$ is the tensor defined by $(\nabla \nabla tr(h))_{ij}=\nabla_{i}\nabla_{j}tr(h)$, $(A\times B)_{ij}=\sum_{k=1}^{n}A_{ik}B_{kj}$ and $\mathcal{D}^{h}\varphi=\sum_{i,j}h_{ij}e_{i}\cdot \nabla_{e_{j}}\varphi$.
\end{theorem}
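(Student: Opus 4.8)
The plan is to compute the second variation of $E$ by differentiating twice along the path $(g+th,\psi+t\varphi)$ and evaluating at $t=0$, collecting the metric-metric, spinor-spinor, and mixed cross-terms separately. Since $(g,\psi)$ is a critical point, the first variation vanishes, so only the genuinely second-order contributions survive. The functional splits naturally into the Hilbert-Einstein part $\int_M R_g\,dv_g$ and the Dirac part $\int_M \langle D_g\psi,\psi\rangle - \lambda|\psi|^2\,dv_g$, and I would treat these two pieces in turn, using the transversality hypothesis $\delta h = 0$ to simplify divergence terms whenever an integration by parts produces them.

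**First I would** handle the purely gravitational piece. The second variation of $\int_M R_g\,dv_g$ is classical (Besse, and the linearizations used in \cite{And0,N}): one needs the first and second variations of $R_g$ and of the volume element $dv_g$ in the direction $h$, recalling that $\tfrac{d}{dt}dv_{g+th}=\tfrac12\,\mathrm{tr}(h)\,dv_g$ and that the linearized scalar curvature involves $\Delta\,\mathrm{tr}(h)$, $\delta\delta h$, and $\langle Ric_g,h\rangle$. Under $\delta h=0$ the term $\delta\delta h$ drops, and after integration by parts the quadratic form assembles into the expression involving the Lichnerowicz Laplacian $\Delta_L$, the tensor $\nabla\nabla\,\mathrm{tr}(h)$, the trace terms, and $\tfrac{R_g}{2}|h|^2$. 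The only subtlety here is bookkeeping: one must carefully combine the genuinely second-order variation of $R_g$ with the first-order variation multiplied by the first-order variation of $dv_g$, which is where the $\tfrac12(-\Delta\,\mathrm{tr}(h)-\langle Ric_g,h\rangle)\mathrm{tr}(h)$ and $\tfrac12\langle h\times T^{g,\psi},h\rangle$ contributions originate after using the first equation in \eqref{eq1} to replace the Einstein tensor by $T^{g,\psi}$.

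**Next I would** differentiate the Dirac part, and here the essential technical input is the Bourguignon-Gauduchon identification \cite{BG} of the spin bundles $\Sigma_{g+th}M$ with $\Sigma_g M$ along the deformation, which lets me write the variation of $D_g$ in the direction $h$ as a concrete first-order operator; differentiating $\langle D_{g+th}(\psi+t\varphi),\psi+t\varphi\rangle$ then produces the genuine second-order spinor term $2\langle D_g\varphi-\lambda\varphi,\varphi\rangle$ (where the Dirac equation $D_g\psi=\lambda\psi$ is used), the mixed term $\langle\mathcal{D}^h\varphi,\psi\rangle$ coming from the metric-variation of the Dirac operator paired against the spinor variation, and the $\tfrac12\langle\nabla\,\mathrm{tr}(h)\cdot\psi,\varphi\rangle$ term coming from the variation of the volume form and of the connection coefficients contracted with Clifford multiplication.

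**The hard part will be** tracking the variation of the Dirac operator and the Hermitian pairing under the Bourguignon-Gauduchon trivialization cleanly enough to isolate exactly the operator $\mathcal{D}^h\varphi=\sum_{i,j}h_{ij}e_i\cdot\nabla_{e_j}\varphi$ and the term $\tfrac12\nabla\,\mathrm{tr}(h)\cdot\psi$, since the connection on the spin bundle depends on $g$ through the Christoffel symbols and the Clifford multiplication itself depends on $g$ through the musical isomorphism; several a priori distinct contributions must be reorganized using the compatibility of $\langle\cdot,\cdot\rangle$ with $\nabla$ and the symmetry of $T^{g,\psi}$, together with the transversality $\delta h=0$ to discard the remaining divergence terms, before the formula takes the stated compact form.
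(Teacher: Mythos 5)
Your skeleton (differentiate twice along the path, use criticality to drop first-order terms, split into the Hilbert--Einstein and Dirac pieces, invoke Bourguignon--Gauduchon for the spin-bundle identification) is reasonable, but there is a concrete gap: the metric--metric second variation of the Dirac energy. Your plan covers the mixed metric--spinor terms via the first-variation formula for $D_g$, and attributes $\tfrac12\langle h\times T^{g,\psi},h\rangle$ and the $\tfrac12\langle T^{g,\psi},h\rangle tr(h)$ term to the gravitational piece ``after using the first equation in \eqref{eq1} to replace the Einstein tensor by $T^{g,\psi}$.'' That substitution is not available inside the $t$-derivative: the constraint \eqref{eq1} holds only at $t=0$, whereas these terms require $\tfrac{d}{dt}\big|_{t=0}T^{g+th,\psi_{g+th}}$, and $T^{g_t,\psi_{g_t}}\neq Ric_{g_t}-\tfrac{R_{g_t}}{2}g_t$ for $t\neq 0$. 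Equivalently, the second $t$-derivative of $\int_M\langle D_{g_t}\psi_{g_t},\psi_{g_t}\rangle\,dv_{g_t}$ in the pure metric direction is governed by the first variation of the energy--momentum tensor, and nothing in your proposal computes it; the ``hard part'' you identify (isolating $\mathcal{D}^h\varphi$ and $\tfrac12\nabla tr(h)\cdot\psi$) is the comparatively routine first-order input from \cite{BG}.

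This missing computation is exactly the technical core of the paper's proof, and it needs more than the Bourguignon--Gauduchon trivialization: the paper works on the generalized cylinder $(\tilde{M},\tilde{g}=dt^2+g(t))$ of B\"ar--Gauduchon--Moroianu \cite{BGM}, parallel-transports $\psi$ in $t$, and uses the Weingarten map, the Riccati equation and the curvature identity \eqref{meth} to differentiate $S_{ij}=\langle e_i\cdot\nabla_{e_j}\psi,\psi\rangle$. The a priori dangerous first-order terms in $\nabla h$ assemble into $\nabla h\times \tilde{Q}$, where $\tilde{Q}$ is the symmetrization of the totally skew-symmetric $3$-tensor $Q(X,Y,Z)=\langle (X\wedge Y\wedge Z)\cdot\psi,\psi\rangle$; hence $\tilde{Q}=0$ and one obtains the clean identity $\partial_t T\big|_{t=0}=-\tfrac12\,h\times T^{g,\psi}$. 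Without noticing this cancellation, your formula would retain spurious $\nabla h$ terms that do not appear in the statement. Note also that the paper avoids most of the bookkeeping you worry about by linearizing the Euler--Lagrange operators $E_1=Ric_g-\tfrac{R_g}{2}g-T^{g,\psi}$ and $E_2=D_g\psi-\lambda\psi$ instead of differentiating the functional twice: at a critical point $E_1=E_2=0$, so all variations of the metric pairing and of $dv_g$ that multiply the gradient drop out, and the mixed terms arise through the tensor $F$ (the $\psi$-variation of $T^{g,\psi}$), whose pairing with $h$ cancels part of the $-\tfrac12\mathcal{D}^h\psi$ contribution to leave exactly $\langle\mathcal{D}^h\varphi,\psi\rangle+\tfrac12\langle\nabla tr(h)\cdot\psi,\varphi\rangle$ --- a cancellation your direct double-differentiation would have to reproduce by hand to get the stated coefficients.
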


\noindent
We recall here that if $h$ is a symmetric 2-tensor, then
$$\Delta_{L}h=\nabla^{*}\nabla h+Ric\circ h+h\circ Ric -2\overset{\circ}{R}h,$$
with $Ric\circ h+h\circ Ric$ is defined by considering $Ric$ as a $(1,1)$-tensor, hence
$$(Ric\circ h+h\circ Ric)(X,Y)=h(Ric(X),Y)+h(Ric(Y),X)$$
and
$$(\overset{\circ}{R}h)(X,Y)=\sum_{i,j}R(e_{i},X,Y,e_{j})h(e_{i},e_{j})$$
for an orthonormal basis $(e_{1},\cdots,e_{n}).$

\noindent
As a corollary, one can describe the space of horizontal Einstein-Dirac deformations. Namely, deformations of the metric with fixed spinor that sits in the null space of $\nabla^{2}E(g,\psi)$. Roughly speaking, let us denote by $(g+th, \psi )$ a path of deformations that allows variations only on the metric $g$ at the given spinor $\psi$, then we have

\begin{corollary}\label{horizontaldeformation}
Let the pair $(g,\psi) $ be a critical point for the functional $E$. Then the symmetric 2-tensor $h$ belongs to the space of horizontal Dirac-Einstein deformations, if it solves the following system of equations
$$\left\{\begin{array}{ll}
\delta h=0\\
\\
\left\langle Ric_g-\displaystyle\frac{R_g}{2}g,h\right\rangle=\left\langle T^{g,\psi},h \right\rangle =0\\
\\
\Delta h+R_gh+T^{g,\psi}\times h=0
\end{array}
\right.$$
where in the first equation, $\delta h=0$ means that the deformation of $g$ is transverse to the diffeomorphism action. Moreover, if $\ker\left(\Delta_{g}+\frac{R_g}{2}\right)=0$ or if $g$ is Einstein, then we can replace the second equation by $tr(h)=\langle Ric_g, h\rangle =0$; in particular, the space of horizontal Dirac-Einstein perturbations is finite dimensional.
\end{corollary}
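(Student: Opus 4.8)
The plan is to specialize Theorem~\ref{secondvariationtransverse} to horizontal directions and to read off the Euler--Lagrange equations of the resulting quadratic form. First I would set $\varphi=0$ in the second variation, since a horizontal deformation keeps the spinor fixed; this annihilates the three spinorial terms $\frac12\langle\nabla tr(h)\cdot\psi,\varphi\rangle$, $\langle\mathcal{D}^{h}\varphi,\psi\rangle$ and $2\langle D_{g}\varphi-\lambda\varphi,\varphi\rangle$, leaving a quadratic form $Q(h)=\nabla^{2}E(g,\psi)[(h,0),(h,0)]$ depending on $h$ alone. The membership condition is that $(h,0)$ lie in the null space of the Hessian, i.e. that the associated symmetric bilinear form vanish when tested against all admissible pairs; I would extract the stated system by testing separately against horizontal variations $(k,0)$ with $\delta k=0$ and against pure spinorial variations $(0,\eta)$.

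Next I would impose the gauge $\delta h=0$ and simplify. Integrating by parts, the term $\langle\nabla\nabla tr(h),h\rangle$ contributes $\int_{M} tr(h)\,\nabla^{i}\nabla^{j}h_{ij}=0$, because $\nabla^{j}h_{ij}=-(\delta h)_{i}=0$, so it drops out. The three terms carrying a factor $tr(h)$ can then be collected using the critical-point identity $T^{g,\psi}=Ric_{g}-\frac{R_{g}}{2}g$ from \eqref{eq1}: since $\langle T^{g,\psi}-Ric_{g},h\rangle=-\frac{R_{g}}{2}tr(h)$, they combine into $-\frac12\big(\Delta tr(h)+\frac{R_{g}}{2}tr(h)\big)tr(h)$, which is exactly the conformal (pure-trace) sector governed by the operator $\Delta+\frac{R_{g}}{2}$. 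Expanding $\Delta_{L}h=\nabla^{*}\nabla h+Ric\circ h+h\circ Ric-2\overset{\circ}{R}h$ and absorbing the Ricci contributions into $T^{g,\psi}$ via the same identity, the remaining tensorial part of $\nabla Q(h)$ reorganizes into $\Delta h+R_{g}h+T^{g,\psi}\times h$; testing against trace-free divergence-free $k$ then yields the third equation. The scalar constraint $\langle Ric_{g}-\frac{R_{g}}{2}g,h\rangle=\langle T^{g,\psi},h\rangle=0$ (the two expressions agreeing by \eqref{eq1}) I would obtain from the spinorial cross-term: testing against $(0,\eta)$ produces a spinor equation which, paired with $\psi$ and combined with the identity $\langle\mathcal{D}^{h}\psi,\psi\rangle=-2\langle T^{g,\psi},h\rangle$ (immediate from the definition of $T^{g,\psi}$ and the symmetry of $h$), forces this pairing to vanish.

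For the final assertions I would argue as follows. If $g$ is Einstein, then $Ric_{g}=\frac{R_{g}}{n}g$, so the second equation reads $R_{g}\big(\frac1n-\frac12\big)tr(h)=0$, giving $tr(h)=0$ and hence $\langle Ric_{g},h\rangle=0$ at once. In the general case, using the constraint $\langle T^{g,\psi},h\rangle=0$ to decouple the conformal direction, the function $tr(h)$ satisfies $(\Delta+\frac{R_{g}}{2})tr(h)=0$, which is precisely the operator appearing in the pure-trace terms of $Q$; the hypothesis $\ker(\Delta+\frac{R_{g}}{2})=0$ then forces $tr(h)=0$, after which the constraint collapses to $\langle Ric_{g},h\rangle=0$, yielding the reduced system $tr(h)=\langle Ric_{g},h\rangle=0$. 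Finite dimensionality is immediate: the third equation is a zeroth-order perturbation of the rough Laplacian, hence a self-adjoint elliptic operator on the closed manifold $M$, so its kernel is finite dimensional, and intersecting it with the closed linear constraints $\delta h=0$ and $\langle T^{g,\psi},h\rangle=0$ preserves finiteness.

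The main obstacle I expect is the tensorial bookkeeping in the second step: reorganizing $\Delta_{L}h$ together with the curvature terms $Ric\circ h+h\circ Ric-2\overset{\circ}{R}h$ and the quadratic contribution $\langle h\times T^{g,\psi},h\rangle$ into the clean form $\Delta h+R_{g}h+T^{g,\psi}\times h$ requires careful use of the critical-point equations and exact tracking of coefficients, in particular pinning down the constant $\tfrac{R_{g}}{2}$ in the conformal operator. A secondary subtlety is the gauge: the null-space condition only delivers $\nabla Q(h)=\delta^{*}\omega$ modulo the image of $\delta^{*}$, so I would need to verify, using $\delta h=0$ together with the contracted second Bianchi identity, that the $1$-form $\omega$ may be taken to vanish, so that the pointwise tensor equation truly holds rather than merely its divergence-free projection.
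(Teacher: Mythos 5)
Your proposal is correct and arrives at the same system, but it routes the key scalar constraint differently from the paper. The paper never tests the Hessian against spinorial variations: it first observes that along a horizontal deformation the identity \eqref{Rglambda} forces the first variation of $R_g$ to vanish (the Bourguignon--Gauduchon identification preserves $|\psi|$), giving $-\Delta tr(h)=\langle Ric_g,h\rangle$; it then writes the full tensor Euler--Lagrange equation $\Delta_L h+\nabla\nabla tr(h)+R_g h+\langle T^{g,\psi},h\rangle g+h\times T^{g,\psi}=0$, takes its trace, and combines the two to obtain $\Delta tr(h)+\frac{R_g}{2}tr(h)=0$ and hence $\langle T^{g,\psi},h\rangle=0$. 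You instead get $\langle T^{g,\psi},h\rangle=0$ pointwise by testing against $(0,\eta)$: with $\delta h=0$ the operator $\mathcal{D}^{h}$ is formally self-adjoint, so the cross terms yield the pointwise spinor equation $\mathcal{D}^{h}\psi+\frac{1}{2}\nabla tr(h)\cdot\psi=0$, and pairing with $\psi$ (using $\langle X\cdot\psi,\psi\rangle=0$ together with the identity $-\frac{1}{2}\langle\mathcal{D}^{h}\psi,\psi\rangle=\langle T^{g,\psi},h\rangle$, which the paper also records) leaves exactly $\langle T^{g,\psi},h\rangle=0$. Your route stays strictly inside the definition of the null space of $\nabla^{2}E$, whereas the paper's shortcut uses the interpretation that horizontal deformations preserve the critical set; it even yields strictly more information, since the scalar constraint is only the $\psi$-pairing of the full spinor equation, and the identity $\langle Ric_g,h\rangle=-\Delta tr(h)$ then falls out a posteriori from $\langle T^{g,\psi},h\rangle=0$ and the trace equation. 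Two cautions: your trace equation $\left(\Delta+\frac{R_g}{2}\right)tr(h)=0$ cannot come from testing only against trace-free divergence-free $k$, so you must, as the paper does, trace the full tensor equation (or test against divergence-free directions with nonvanishing trace); and the $\mathrm{Im}(\delta^{*})$ ambiguity you flag at the end is real but is silently ignored by the paper, so discharging it via the contracted Bianchi identity as you propose is more careful than the source. Finally, in the Einstein case you implicitly use $R_g>0$ (which holds here since $\lambda>0$ by \eqref{Rglambda}) to conclude $tr(h)=0$ from $\left(\frac{1}{n}-\frac{1}{2}\right)R_g\,tr(h)=0$.
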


\noindent
In sections 5, we provide some examples of computations of the second variation. For instance, we consider the example of a manifold with a real Killing spinor and the example of Sasakian manifolds with Quasi-Killing spinors.

\noindent
\textbf{Acknowledgment:} The authors want to extend their thanks and gratitude to the referee for the comments and suggestions that led to this improved version of the paper.


\section{Notations and basic formulas}

\noindent
In this section we are going to introduce some definition about spin geometry and we recall some basic formulas that we will need in the sequel. Essentially we will use the same notation as in \cite{Kim}, where we address the reader for the full detailed exposition.\\
Let $\Sigma M$ be the canonical spinor bundle associated to $M$, whose sections are simply called spinors on $M$. This bundle is endowed with a natural Clifford multiplication
$$\text{Cliff}:TM\otimes\Sigma M\longrightarrow \Sigma M, $$
which we will denote by $\cdot$ in the sequel; also we denote the canonical hermitian metric on $\Sigma M$ by $( \cdot, \cdot )$ and the induced metric connection by $\nabla$. We will also denote by $\langle \cdot,\cdot \rangle:=Re (\cdot,\cdot)$, the real part of the hermitian metric which defines an Euclidean dot product on $\Sigma M$. Some useful relations: for $X,Y \in \Gamma(TM), \psi\in \Gamma(\Sigma M)$, where $\Gamma$ is used here to denote smooth sections of the given bundles,
$$\langle X\cdot \psi, Y\cdot \psi \rangle=g(X,Y)|\psi|^2, \quad \langle X\cdot \psi, \psi \rangle=0 ,$$
$$\nabla_X (Y\cdot\psi)=(\nabla_X Y)\cdot\psi + Y\cdot(\nabla_X\psi) \; .$$
If $(e_1, \ldots ,e_n)$ is a local orthonormal frame, it holds, for $k=1, \ldots,n$
$$\nabla_{e_k}\psi = e_k(\psi)-\sum_{i,j=1}^n \Gamma_{kj}^i e_i \cdot e_j \cdot \psi,$$
where $\Gamma_{ki}^j$ are the coefficients of the connection for the frame $(e_1, \ldots ,e_n)$, $\nabla_{e_k} e_i = \sum_{j=1}^n \Gamma_{ki}^j e_j .$\\
We denote by $D :\Sigma M\rightarrow \Sigma M$ the Dirac operator acting on spinors in the following way
$$D \psi=\sum_{i=1}^n  e_i \cdot \nabla_{e_i} \psi \; .$$
Now, let
$$R(X, Y )Z = \nabla_X \nabla_Y Z - \nabla_Y \nabla_X Z -\nabla_{[X,Y ]}Z$$
be the curvature tensor on $M$, in the local orthonormal frame $(e_1, \ldots ,e_n)$, we denote
$$Rm_{ijkl} = Rm(e_i,e_j ,e_k,e_l) = -g(R(e_i,e_j)e_k,e_l),$$
$$ Ric_{jl} = Ric(e_j ,e_l) = \sum_{i=1}^n Rm_{ijil}, \quad R_g=\sum_{i=1}^n Ric_{ii},$$
where $Rm, Ric, R_g$ are the Riemann tensor, the Ricci tensor and the scalar curvature of $M$ respectively. We can extend the curvature tensor on the spinor bundle $\Sigma M$  by
$$R(X, Y ) \psi = \nabla_X \nabla_Y \psi - \nabla_Y \nabla_X \psi -\nabla_{[X,Y ]}\psi ,$$
and it holds
$$ R(X, Y )\psi   = -\frac{1}{2} R(X, Y ) \cdot \psi,$$
where in the right hand side we used the notation for the Clifford multiplication of a 2-form with a spinor, that is:
$$R(X,Y)\cdot \psi =\sum_{i<j} Rm(e_{i},e_{j},X,Y)e_{i}\cdot e_{j}\cdot \psi.$$
Similarly,
$$Ric(X) \cdot \psi= - \sum_{i=1}^n e_i \cdot R(e_i,X)\cdot \psi, \qquad R_g\psi= - \sum_{i=1}^n e_i \cdot Ric(e_i)\cdot \psi .$$
Moreover, if $M$ admits a parallel spinor, that is $\psi \in \Sigma M$ such that $\nabla \psi =0$, then $M$ is Ricci-flat. Let us also write the following formula (Lemma 1.2 in \cite{Kim})
$$2Ric(X) \cdot \psi = D(\nabla_X \psi) - \nabla_X(D\psi)  - \sum_{i=1}^n e_i \cdot \nabla_{(\nabla_{e_i}X)} \psi, \quad X\in \Gamma(TM), \; \psi\in\Gamma(\Sigma M)$$
which implies the Schr\"{o}dinger-Lichnerowicz formula
\begin{equation}\label{Schrodinger-Lichnerowicz}
D^2\psi=-\Delta \psi + \frac{1}{4}R_g\psi ,
\end{equation}
where $-\Delta=\nabla ^{*}\nabla $ is the Laplacian of the connection.  Finally, we recall the formulas for the first variation of the Dirac operator, the volume form and the scalar curvature, in order to compute the variation of the energy functional. The behaviour of the spinor bundle under small changes of the metric has been done by Bourguignon-Gauduchon in \cite{BG}, where they introduced a natural isomorphism to identify spinor bundles related to different metrics on the same manifold. So, we denote by $Sym(0, 2)$ the space of all symmetric (0,2)-tensor fields on $M$ and we will still denote by $\langle \cdot, \cdot \rangle$ the induced metric on $Sym(0, 2)$. It holds
\begin{equation}\label{variationdirac}
\frac{d}{dt}\Big|_{t=0}\langle D_{g+th} \psi_{g+th},\psi_{g+th}\rangle_{g+th}= -\langle T^{g,\psi},h\rangle, \quad h\in Sym(0, 2)
\end{equation}
with $T^{g,\psi}$ the energy-momentum tensor introduced in (\ref{energy-momentum tensor}); here $t$ is a sufficiently small real parameter and we have set $D_{g+th}$ the Dirac operator of the metric $g+th$, also $\psi_{g+th}$ is the push forward of $\psi$ by using the isomorphism defined in \cite{BG}. We recall also, for $ h\in Sym(0, 2)$
\begin{equation}\label{variationvolumecurvature}
\frac{d}{dt}\Big|_{t=0}dv_{g+th}= \frac{1}{2}\langle g,h\rangle dv_g, \quad \frac{d}{dt}\Big|_{t=0} R_{g+th}=-\Delta tr(h)+\delta^{2}(h)- \langle Ric,h\rangle,
\end{equation}
where $\delta^{2}(h)=\sum_{i,j=1}^{n}\nabla^{i}\nabla^{j}h_{ij}$. Finally, putting together formulas (\ref{variationdirac}) and (\ref{variationvolumecurvature}) we obtain the system (\ref{eq1}) for the critical points of the energy functional $E$.


\section{Proof of Theorem \ref{compactnessresults}}

\noindent
Let $\mathcal{E}(D,c,K)$ the set introduced in (\ref{setofmetrics}). For the sake of simplicity, in the sequel sometimes we will omit the dependance from $g$ and $\psi$. We will start with the following
\begin{lemma}
There exists a positive constant $C(n,\lambda)$, depending on the dimension $n$ and the positive real parameter $\lambda$, such that, if $(g,\psi)\in \mathcal{E}(D,c,K)$ then
$$\|\psi\|_{\infty}\leq C(n,\lambda).$$
\end{lemma}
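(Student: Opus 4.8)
The goal is an $L^\infty$ bound on the spinor $\psi$ depending only on $n$ and $\lambda$, for any critical point in $\mathcal{E}(D,c,K)$. The natural strategy is to derive a pointwise differential inequality for the scalar quantity $|\psi|^2$ and then invoke a maximum-principle / elliptic-estimate argument at a maximum point. Let me lay out the plan.

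\emph{The plan is to combine the Schrödinger–Lichnerowicz formula with the Einstein-spinor equations.} Since $(g,\psi)$ is critical, it satisfies the system \eqref{eq1}, so in particular $D_g\psi=\lambda\psi$, which gives $D_g^2\psi=\lambda^2\psi$. Feeding this into the Schrödinger–Lichnerowicz identity \eqref{Schrodinger-Lichnerowicz} yields
\begin{equation*}
-\Delta\psi+\tfrac14 R_g\psi=\lambda^2\psi,
\end{equation*}
i.e. $\nabla^*\nabla\psi=\bigl(\lambda^2-\tfrac14 R_g\bigr)\psi$. First I would use the contracted equation \eqref{Rglambda}, namely $R_g=\tfrac{\lambda}{n-2}|\psi|^2$, to turn this into a genuinely nonlinear equation for $\psi$ that carries a favorable sign: the self-interaction term $-\tfrac14 R_g\psi=-\tfrac{\lambda}{4(n-2)}|\psi|^2\psi$ is of defocusing (repulsive) type when $\lambda>0$, which is exactly what one needs to prevent $|\psi|$ from blowing up.

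\emph{Next I would pass to the scalar function $u=|\psi|^2$ and compute its Laplacian.} Using the Bochner-type identity $\tfrac12\Delta|\psi|^2=\langle\nabla^*\nabla\psi,\psi\rangle-|\nabla\psi|^2$ (with the sign convention $-\Delta=\nabla^*\nabla$ fixed in the excerpt, so one must track signs carefully), together with the equation above, I obtain
\begin{equation*}
\tfrac12\Delta|\psi|^2=-\langle\nabla^*\nabla\psi,\psi\rangle+|\nabla\psi|^2=-\bigl(\lambda^2-\tfrac14 R_g\bigr)|\psi|^2+|\nabla\psi|^2.
\end{equation*}
Substituting $R_g=\tfrac{\lambda}{n-2}|\psi|^2$ gives, with $u=|\psi|^2$,
\begin{equation*}
\tfrac12\Delta u=\tfrac{\lambda}{4(n-2)}u^2-\lambda^2 u+|\nabla\psi|^2.
\end{equation*}
Since $|\nabla\psi|^2\geq 0$, at an interior maximum point $x_0$ of $u$ (which exists because $M$ is closed) one has $\Delta u(x_0)\leq 0$, and therefore $\tfrac{\lambda}{4(n-2)}u(x_0)^2-\lambda^2 u(x_0)\leq 0$. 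Dividing by $u(x_0)>0$ (the trivial case $\psi\equiv 0$ being excluded for Einstein spinors) yields $u(x_0)\leq 4(n-2)\lambda$, and since $x_0$ is the maximum this gives the pointwise bound $\|\psi\|_\infty^2=\max_M u\leq 4(n-2)\lambda$, i.e. the desired estimate with $C(n,\lambda)=2\sqrt{(n-2)\lambda}$.

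\emph{The main subtlety is the sign bookkeeping and justifying the maximum-principle step.} One must be careful that with the convention $-\Delta=\nabla^*\nabla$, the operator $\Delta$ here is the (negative-spectrum) geometer's Laplacian, so $\Delta u(x_0)\leq 0$ at a maximum is correct, but every sign in the Bochner and Schrödinger–Lichnerowicz substitutions has to be consistent with that. The term $|\nabla\psi|^2$, being nonnegative, is discarded at the maximum and costs nothing — this is the crucial point that makes the argument clean and is why, interestingly, neither $D$, $c$, nor $K$ actually enters the final constant; the bound follows purely from criticality and the closedness of $M$. The only thing left to confirm is that $u$ is smooth enough (which it is, by elliptic regularity for the Dirac system) so that the classical maximum principle applies at $x_0$; no reference to the diameter or non-collapsing hypotheses is needed for this particular lemma, so those constraints will be used only in the later, genuinely global, parts of the compactness proof.
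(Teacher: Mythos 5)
Your argument is correct and is essentially the paper's own proof: the paper applies the maximum principle to $f=\tfrac12|\psi|^2$, deduces $R_g\le 4\lambda^2$ at the maximum point and then converts this to $|\psi|^2\le 4(n-2)\lambda$ via (\ref{Rglambda}), whereas you substitute (\ref{Rglambda}) into the Schr\"odinger--Lichnerowicz equation first --- the same computation in a slightly different order, yielding the same constant. One small slip to fix: the Bochner identity as written in your prose, $\tfrac12\Delta|\psi|^2=\langle\nabla^*\nabla\psi,\psi\rangle-|\nabla\psi|^2$, has the wrong sign under the convention $-\Delta=\nabla^*\nabla$; the displayed version you actually compute with, $\tfrac12\Delta|\psi|^2=-\langle\nabla^*\nabla\psi,\psi\rangle+|\nabla\psi|^2$, is the correct one.
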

\begin{proof}
By formula (\ref{Rglambda}) and since $D^{2}_{g}\psi =\lambda^{2}\psi$ from the Schr\"{o}dinger-Lichnerowicz equation (\ref{Schrodinger-Lichnerowicz}) we obtain
\begin{equation}\label{formulaspinor}
  -\Delta\psi+\lambda\frac{|\psi|^{2}}{4(n-2)}\psi=\lambda^{2}\psi.
\end{equation}
Now we let $f=\frac{1}{2}|\psi|^{2}$ and we have
$$-\Delta f =\langle -\Delta \psi, \psi \rangle -|\nabla \psi|^{2}\leq -\frac{R}{2}f+2\lambda^{2}f,$$
where the laplacian acting on $f$ is the usual metric laplacian on $M$, the one acting on $\psi$ is the Laplacian of the connection $\nabla ^{*}\nabla $; we denote them in the same way since it is clearly understood which we use. Hence,
$$-\Delta f+\frac{R}{2}f \leq 2\lambda^{2}f.$$
At a maximum point $x_{0}\in M$, we have $\frac{R}{2}\leq 2 \lambda^{2}$. Thus $R\leq 4 \lambda^{2}$. But, again by formula (\ref{Rglambda}) we get
$$|\psi|^{2}\leq 4(n-2)\lambda.$$
\end{proof}

\noindent
Next we prove the following
\begin{proposition}\label{propric}
For any metric $g$ such that $(g,\psi)\in \mathcal{E}(D,c,K)$, we have that $|Ric_{g}|$ is uniformly bounded in terms of $\lambda$ and $n$ and $K$. In particular the Sobolev constant of $g$ is uniformly bounded from below and if $p\in M$ and $r>0$ sufficiently small, then there exists a positive constant $C$ depending on $\lambda, n, K$ and $D$ such that
$$vol(B(p,r))\geq C r^{n}.$$
\end{proposition}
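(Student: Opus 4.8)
The plan is to read off the Ricci bound from the Einstein equation $Ric_g=\frac{R_g}{2}g+T^{g,\psi}$ by controlling each term on the right-hand side, and then to feed the resulting two-sided Ricci bound, together with the diameter bound and the lower volume bound encoded in the constant $c$, into the standard non-collapsing machinery. First I would bound the scalar curvature: by (\ref{Rglambda}) and the previous Lemma, $R_g=\frac{\lambda}{n-2}|\psi|^2\leq 4\lambda^2$, while $R_g\geq 0$ since $\lambda>0$. This controls the term $\frac{R_g}{2}g$, because $|g|=\sqrt{n}$. The substantial point is to bound the energy-momentum tensor $T^{g,\psi}$, which requires a pointwise bound on $|\nabla\psi|$; this is precisely where the hypothesis $-\Delta_g R_g\geq -K R_g$ enters.

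To bound $|\nabla\psi|$, set $f=\frac{1}{2}|\psi|^2$. Combining the identity $-\Delta f=\langle -\Delta\psi,\psi\rangle-|\nabla\psi|^2$ (already used in the Lemma) with the elliptic equation (\ref{formulaspinor}), I obtain $-\Delta f=2\lambda^2 f-\frac{\lambda}{n-2}f^2-|\nabla\psi|^2$. Since $R_g=\frac{2\lambda}{n-2}f$, the hypothesis $-\Delta_g R_g\geq -K R_g$ is equivalent to $-\Delta f\geq -K f$. Substituting and discarding the nonpositive term $-\frac{\lambda}{n-2}f^2$ gives $|\nabla\psi|^2\leq (2\lambda^2+K)f\leq 2(n-2)\lambda(2\lambda^2+K)$, a bound depending only on $n,\lambda,K$. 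The Clifford relations $|X\cdot\phi|=|X||\phi|$ then yield $|T^{g,\psi}|\leq \frac{n}{2}|\nabla\psi|\,|\psi|$, so $|T^{g,\psi}|$ is bounded in terms of $n,\lambda,K$. Plugging into the Einstein equation produces the claimed uniform bound $|Ric_g|\leq \Lambda(n,\lambda,K)$.

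For the volume and Sobolev statements, evaluating the functional at a critical point gives $E(g,\psi)=\int_M R_g\,dv_g$, since the fermionic terms cancel when $D_g\psi=\lambda\psi$; hence the energy constraint defining $Crit_c(E)$ together with $R_g\leq 4\lambda^2$ forces a uniform lower bound $vol(M,g)\geq v_0(c,\lambda)$, which is formula (\ref{volumeboundedbelow}). At this point $g$ carries a two-sided Ricci bound, a bounded diameter, and a volume bounded below, so by the standard results in the Einstein-compactness theory (\cite{And0,And1}) the $L^2$-Sobolev constant is uniformly bounded below. Finally, testing the Sobolev inequality against the usual cutoff functions supported on small balls converts the uniform Sobolev constant into the non-collapsing estimate $vol(B(p,r))\geq C r^n$ for $r$ small, with $C=C(n,\lambda,K,D)$.

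The main obstacle is the gradient bound on the spinor, as this is the only step that exploits the specific structure of the coupled system: the identity (\ref{formulaspinor}) and the translation of the curvature hypothesis into $-\Delta f\geq -K f$. Once $|Ric_g|$ is controlled, the geometric conclusions follow from known non-collapsing and Sobolev estimates for manifolds with bounded Ricci curvature, and I would cite them rather than reprove them.
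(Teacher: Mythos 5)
Your proposal is correct and follows essentially the same route as the paper: the identical Bochner-type computation (rewritten in terms of $f=\frac{1}{2}|\psi|^{2}$ rather than $|\psi|^{2}$) converts the hypothesis $-\Delta_{g}R_{g}\geq -KR_{g}$ via equation (\ref{formulaspinor}) into the gradient bound $|\nabla\psi|^{2}\leq (2\lambda^{2}+K)f$, which controls $T^{g,\psi}$ and hence $|Ric_{g}|$, and the paper likewise obtains the Sobolev bound from Croke's theorem, with the volume lower bound supplied by $c$ exactly as in your use of formula (\ref{volumeboundedbelow}). The only cosmetic difference is the final step, where the paper passes through the equivalent isoperimetric inequality $vol(\partial B(x,\rho))\geq C_{1}\bigl(vol(B(x,\rho))\bigr)^{\frac{n-1}{n}}$ and integrates in $\rho$, while you test the Sobolev inequality on cutoff functions; these are standard and interchangeable.
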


\begin{proof}
Notice that the uniform bound on $\psi$ above, implies a uniform bound on the scalar curvature $R$. The next step is then to find a bound on $|T^{g,\psi}|$. So as above, we compute $-\Delta R$:

\begin{align}
-\Delta R &=-\frac{\lambda}{n-2}\Delta |\psi|^{2}\notag\\
&=\frac{2\lambda}{n-2}\Big(\langle -\Delta \psi, \psi\rangle -|\nabla \psi|^{2}\Big)\notag\\
&=\frac{2\lambda}{n-2}\Big(\left[\lambda^{2}-\frac{R}{4}\right]|\psi|^{2}-|\nabla \psi|^{2}\Big).
\end{align}
Since $(g,\psi)\in \mathcal{E}(D,c,K)$ we have
\begin{align}
|\nabla \psi|^{2}&= \frac{n-2}{2\lambda}\Delta R +\left[\lambda^{2}-\frac{R}{4}\right]|\psi|^{2}\notag\\
&\leq \left[\frac{K}{2}+\lambda^{2}-\frac{R}{4}\right]|\psi|^{2}.\notag
\end{align}
Therefore, $|\nabla \psi|^{2}$ is uniformly bounded. Thus, we see that $|T^{g,\psi}|$ is also uniformly bounded, hence $|Ric|$ is also uniformly bounded in terms of $\lambda$ and $n$ and $K$. Now, the uniform bound on the Ricci curvature and the diameter of the metric $g$ implies, by the result of Croke \cite{Cr}, that the Sobolev constant is uniformly bounded from below. Where here, the Sobolev constant $c_{S}$ is defined as the best constant $c$ satisfying the inequality
$$\|u\|_{L^{\frac{2n}{n-2}}(M)}\leq \frac{1}{c}\|\nabla u\|_{L^{2}(M)}+Vol(M)^{-\frac{2}{n}}\|f\|_{L^{2}(M)}, \forall u \in C^{0,1}(M),$$
if $M$ is compact and
$$\|u\|_{L^{\frac{2n}{n-2}}(M)}\leq \frac{1}{c}\|\nabla u\|_{L^{2}(M)}, \forall u\in C^{0,1}_{c}(M),$$
if $M$ is not compact.  By using the equivalence between the Sobolev inequality and the isoperimetric inequality, as in \cite{N}, we have that
$$vol(\partial B(x,\rho))\geq C_{1}\Big(vol(B(x,\rho))\Big)^{\frac{n-1}{n}},$$
for all $x\in M$ and $\rho>0$ such that $B(x,\rho)\subset B(p,r)$ and where $C_{1}$ is the lower bound on the Sobolev constant. Integrating this last inequality yields the desired result.
\end{proof}

\noindent
We notice that as a byproduct of Proposition \ref{propric}, the total volume of the metric $g$ is uniformly bounded: this follows from the Bishop-Gromov inequality and the uniform bound on the diameter.

\noindent
We focus now on the case $n=4$; in fact, from the Chern-Gauss-Bonnet formula in dimension 4, we have that
$$\chi(M)=\frac{1}{32\pi^{2}}\int_{M}|Rm|^{2}-4|Ric|^{2}+R^{2}\ dv_{g},$$
in particular a uniform bound on the Ricci curvature implies automatically a uniform bound on $\int_{M}|Rm|^{2}dv_{g}$. Therefore we are going to prove a local $L^{\infty}$ bound on $|Rm|$ provided smallness on its $L^{2}$-norm. This will be done through a sequence of propositions.

\begin{proposition}\label{proprim}
Let $(g,\psi)\in \mathcal{E}(D,K,c)$, $p\in M$ and $r>0$ sufficiently small. There exists $\varepsilon_{0}(\lambda,K,D)>0$ such that, if
$$\int_{B_{2r}}|Rm|^{2}dv<\varepsilon_{0},$$
then there exists $C(\lambda,K,D)>0$ such that
\begin{equation}\label{ineq1}
\|Rm\|_{L^{4}(B_r)}\leq C\Big(\|Rm\|_{L^{2}(B_{2r})}+vol(B_{8r})^{\frac{1}{2}}\Big)
\end{equation}
and
\begin{equation}\label{ineq2}
\|\nabla Rm\|_{L^{2}(B_r)}\leq C\Big[\| Rm\|_{L^{2}(B_{2r})}^{2}+\|Rm\|_{L^{2}(B_{2r})}+vol(B_{8r})+vol(B_{8r})^{\frac{1}{2}}\Big],
\end{equation}
where we set $B_r=B(p,r)$.
\end{proposition}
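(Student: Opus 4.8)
The plan is to prove this by an $\varepsilon$-regularity argument in the spirit of \cite{And0,BKN,N}, combining the differential identity satisfied by the curvature tensor with the uniform Sobolev inequality provided by Proposition \ref{propric} and the absorption of the quadratic curvature term using the smallness hypothesis. In dimension four the Sobolev embedding $W^{1,2}\hookrightarrow L^{4}$ is exactly the one available, which is why the $L^{4}$ estimate \eqref{ineq1} is the natural conclusion of a single gain of integrability.

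First I would derive the elliptic equation satisfied by $Rm$. The second Bianchi identity and its contraction give the Weitzenb\"{o}ck-type identity
$$\Delta Rm = Rm\ast Rm + \nabla^{2}Ric,$$
where $\nabla^{2}Ric$ denotes a fixed linear combination of components of the Hessian of the Ricci tensor and $Rm\ast Rm$ a universal quadratic contraction. Using the critical point system \eqref{eq1} I would substitute $Ric=T^{g,\psi}+\frac{R_g}{2}g$, so the inhomogeneous term becomes a combination of $\nabla^{2}T^{g,\psi}$ and $\nabla^{2}R_g$. The crucial point is that these ``matter'' terms are controlled purely through the spinor: identity \eqref{Rglambda} gives $R_g=\frac{\lambda}{n-2}|\psi|^{2}$, while the Schr\"{o}dinger-Lichnerowicz formula \eqref{Schrodinger-Lichnerowicz} together with $D_g^{2}\psi=\lambda^{2}\psi$ gives $-\Delta\psi=\big(\lambda^{2}-\frac{R_g}{4}\big)\psi$. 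Since $\|\psi\|_\infty$ and $\|\nabla\psi\|_\infty$ are uniformly bounded (by the first lemma of this section and by Proposition \ref{propric}), the right-hand side of the last equation is bounded, so interior $L^{2}$ elliptic estimates yield $\|\nabla^{2}\psi\|_{L^{2}(B_{2r})}\leq C\, vol(B_{8r})^{1/2}$; consequently $\nabla Ric=\nabla T^{g,\psi}+\frac12\nabla R_g\otimes g$ is bounded in $L^{2}(B_{2r})$ by $C\, vol(B_{8r})^{1/2}$ as well.

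Next I would test the identity against $Rm\,\phi^{2}$, for a cutoff $\phi$ equal to $1$ on $B_r$, supported in $B_{2r}$, with $|\nabla\phi|\leq C/r$. Integration by parts turns the principal term into $\int|\nabla Rm|^{2}\phi^{2}$, the quadratic term into $\int|Rm|^{3}\phi^{2}$, and, after integrating by parts once more so as not to differentiate $Ric$ twice, the source term is bounded by $\varepsilon\int|\nabla Rm|^{2}\phi^{2}+C_{\varepsilon}\int|\nabla Ric|^{2}\phi^{2}$ plus lower-order boundary terms, which produces precisely the $vol(B_{8r})$ and $vol(B_{8r})^{1/2}$ contributions. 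For the cubic term I would use H\"{o}lder, $\int|Rm|^{3}\phi^{2}\leq\|Rm\|_{L^{2}(B_{2r})}\,\big\||Rm|\phi\big\|_{L^{4}}^{2}$, followed by Kato's inequality $|\nabla|Rm||\leq|\nabla Rm|$ and the uniform dimension-four Sobolev inequality from Proposition \ref{propric}, giving $\big\||Rm|\phi\big\|_{L^{4}}^{2}\leq C\big(\int|\nabla Rm|^{2}\phi^{2}+\tfrac{1}{r^{2}}\int_{B_{2r}}|Rm|^{2}+ vol(B_{8r})\big)$.

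Finally I would invoke the smallness hypothesis $\int_{B_{2r}}|Rm|^{2}<\varepsilon_{0}$: the cubic term is then bounded by $\|Rm\|_{L^{2}(B_{2r})}$ times $\int|\nabla Rm|^{2}\phi^{2}$ plus lower order, so choosing $\varepsilon_{0}$ small enough lets me absorb $\int|\nabla Rm|^{2}\phi^{2}$ into the left-hand side. Collecting the remaining terms and taking square roots yields the gradient estimate \eqref{ineq2}, and reinserting it into the Sobolev inequality gives the $L^{4}$ estimate \eqref{ineq1}; the quadratic $\|Rm\|_{L^{2}(B_{2r})}^{2}$ in \eqref{ineq2} arises from feeding \eqref{ineq1} back into the cubic bound. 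The main obstacle I anticipate is the first step: correctly deriving the curvature equation for the coupled system and, above all, bounding the inhomogeneous term without requiring more regularity on $\psi$ than is available. This is resolved by keeping the source under the integral sign and integrating by parts so that only $\nabla Ric$, hence only $\nabla^{2}\psi$ in $L^{2}$, is ever needed, the latter being supplied by elliptic regularity applied to $-\Delta\psi=\big(\lambda^{2}-\frac{R_g}{4}\big)\psi$.
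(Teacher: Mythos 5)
Your proof is correct and shares the paper's overall skeleton: Hamilton's identity $\Delta Rm = Rm\ast Rm + \nabla^{2}Ric$ from \cite{H}, a cutoff energy estimate, the uniform Sobolev inequality from Proposition \ref{propric}, and absorption of the cubic term under the smallness hypothesis, with the same two-pass bootstrap (absorb to get the $L^{4}$ bound \eqref{ineq1}, feed it back to get \eqref{ineq2}). Where you genuinely diverge is the treatment of the source term $\nabla^{2}Ric$. The paper estimates $\|\eta\,\nabla^{2}Ric\|_{L^{2}}$ directly: it computes the Laplacian of $S_{ij}=\langle e_{i}\cdot\nabla_{e_{j}}\psi,\psi\rangle$ by a spinorial Bochner-type calculation, obtaining $\Delta S_{ij}=2\left(\frac{R}{4}-\lambda^{2}\right)S_{ij}+Rm\ast\nabla\psi\ast\psi+\nabla Rm\ast\psi\ast\psi+\nabla^{2}\psi\ast\nabla\psi$, and then applies the Calder\'on--Zygmund inequality of \cite{CZ} to $\eta S_{ij}$, which yields $\|\eta\,\nabla^{2}Ric\|_{L^{2}}\leq C\big(\|\eta Rm\|_{L^{2}}+\|\eta\nabla Rm\|_{L^{2}}+Vol(B_{4r})^{\frac{1}{2}}\big)$; the resulting $\|\eta\nabla Rm\|_{L^{2}}$ term is then absorbed via Young's inequality. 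You instead integrate by parts once in the energy estimate so that only $\|\nabla Ric\|_{L^{2}}$ is ever needed, and you control it through $\|\nabla^{2}\psi\|_{L^{2}}\leq C\,vol^{\frac{1}{2}}$ (elliptic regularity for $-\Delta\psi=(\lambda^{2}-\frac{R}{4})\psi$, exactly the \cite{CZ} step the paper also performs) together with the uniform $L^{\infty}$ bounds on $\psi$, $\nabla\psi$ and $\nabla R$ from \eqref{Rglambda} and Proposition \ref{propric}. This is a real simplification for the present proposition: it bypasses the whole $\Delta S_{ij}$ computation and, in particular, the commutator term $\nabla Rm\ast\psi\ast\psi$. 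Two remarks: first, your cutoff terms carry explicit factors $1/r$ and $1/r^{2}$ that are not reflected in the claimed $r$-independent constant, but the paper's own cutoff argument has the identical feature, so this is not a gap relative to it; second, the paper's heavier computation is not wasted downstream, since the subsequent $\varepsilon$-regularity (Proposition \ref{propreg}) requires an $L^{4}$ bound on $H=\nabla^{2}Ric$ itself, which enters as a potential in $\Delta u\geq -fu$ with $f=|Rm|+|H|$ in Lemma \ref{classicallemma}; there the integration-by-parts trick is unavailable, so your route proves Proposition \ref{proprim} but does not supersede the paper's machinery for the rest of Section 3.
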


\begin{proof}
Given two tensors $A$ and $B$, we will use $A*B$ to denote a bilinear expression of $A$ and $B$. We recall then from \cite[Lemma~7.4]{H}, that
\begin{equation}\label{laplacianRm}
\Delta Rm=Rm*Rm+\nabla^{2}Ric,
\end{equation}
where
$$(\nabla^{2}Ric)_{ijkl}=-Ric_{jl,ik}+Ric_{il,jk}+Ric_{jk,il}-Ric_{ik,jl} ,$$
where the comma denotes the covariant derivative in local coordinates $(e_1, \ldots ,e_n)$. Since $Ric-\frac{R}{2}g=T^{g,\psi}$, we need to compute the contribution of $T^{g,\psi}$ in $\nabla^{2}Ric$. By the definition (\ref{energy-momentum tensor}) of $T^{g,\psi}$ we have
$$T^{g,\psi}_{ij}=-\frac{1}{4}\Big(\langle e_{i}\cdot \nabla_{e_j}\psi,\psi\rangle+\langle e_{j}\cdot \nabla_{e_i}\psi,\psi\rangle\Big).$$
We set $S_{ij}=\langle e_{i}\cdot \nabla_{e_j}\psi,\psi\rangle$ and we compute the Laplacian of $S_{ij}$. For simplicity, we can assume that we are in inertial coordinates at a given point $x$, that is $\Gamma_{ij}^{k}(x)=0$ and $g_{ij}(x)=\delta_{ij}$.
\begin{align}
\nabla_{e_k}\nabla_{e_k}S_{ij}&=\nabla_{e_k}[\langle e_{i}\cdot \nabla_{e_k}\nabla_{e_j}\psi,\psi\rangle +\langle e_{i}\cdot \nabla_{e_j}\psi,\nabla_{e_k}\psi\rangle+\langle \nabla_{e_{k}}e_{i}\cdot \nabla_{e_{j}}\psi,\psi\rangle]\notag\\
&=\nabla_{e_k}[\langle e_{i}\cdot [\nabla_{e_j}\nabla_{e_k}\psi -\frac{1}{2}R(e_{k},e_{j})(\psi)],\psi\rangle]\notag\\
&+\langle e_{i}\cdot \nabla_{e_k}\nabla_{e_j}\psi,\nabla_{e_k}\psi\rangle +\langle e_{i}\cdot \nabla_{e_j}\psi,\nabla_{e_k}\nabla_{e_k}\psi\rangle\notag\\
&+\langle \nabla_{e_{k}}\nabla_{e_{k}}e_{i}\cdot \nabla_{e_{j}} \psi,\psi\rangle + \langle \nabla_{e_{k}}e_{i}\cdot \nabla_{e_{k}}\nabla_{e_{j}}\psi,\psi\rangle +2\langle \nabla_{e_{k}}e_{i}\cdot \nabla_{e_{j}}\psi,\nabla_{e_{k}}\psi\rangle\notag\\
&=\langle e_{i}\cdot \nabla_{e_k}\nabla_{e_j}\nabla_{e_k}\psi,\psi\rangle +\langle e_{i}\cdot [\nabla_{e_j}\nabla_{e_k}+\nabla_{e_k}\nabla_{e_j}]\psi,\nabla_{e_k}\psi\rangle\notag\\
&+\langle e_{i}\cdot \nabla_{e_j}\psi,\nabla_{e_k}\nabla_{e_k}\psi\rangle-\frac{1}{2}\langle e_{i}\cdot\nabla_{e_k}R(e_{k},e_{j})(\psi),\psi\rangle +Rm*\nabla \psi *\psi\notag\\
&=\langle e_{i}\cdot\nabla_{e_j}\nabla_{e_k}\nabla_{e_k}\psi,\psi\rangle+Rm*\nabla \psi *\psi+\nabla Rm *\psi*\psi\notag \\
&+\langle e_{i}\cdot [\nabla_{e_j}\nabla_{e_k}+\nabla_{e_k}\nabla_{e_j}]\psi,\nabla_{e_k}\psi\rangle+\langle e_{i}\cdot \nabla_{e_j}\psi,\nabla_{e_k}\nabla_{e_k}\psi\rangle.\notag
\end{align}
Summing in $k$, the first term after the last equality yields
$$\langle e_{i}\cdot \nabla_{j}\left[\left(\frac{R}{4}-\lambda^{2}\right)\psi\right],\psi\rangle=\left(\frac{R}{4}-\lambda^{2}\right)\langle e_{i}\cdot\nabla_{j}\psi,\psi\rangle,$$
and the last term yields
$$\left(\frac{R}{4}-\lambda^{2}\right)\langle e_{i}\cdot \nabla_{j}\psi, \psi\rangle.$$
Therefore,
$$\Delta S_{ij}=2\left(\frac{R}{4}-\lambda^{2}\right)S_{ij}+Rm*\nabla\psi *\psi+\nabla Rm*\psi*\psi+\nabla^{2}\psi*\nabla \psi.$$
Notice that since $\Delta \psi \in L^{\infty}$ and since $Ric$ is bounded uniformly, we have from \cite{CZ}, the existence of $C$ depending on $\lambda, K$ and $D$ such that
\begin{align}
\|\nabla^{2}\psi\|_{L^{2}(B_{r})}&\leq C(\|\Delta \psi \|_{L^{2}(B_{2r})}+\|\psi\|_{L^{2}(B_{2r})})\leq CVol(B_{2r})^{\frac{1}{2}}.
\end{align}
Also, recall that $S_{ij}\in L^{\infty}$ uniformly. Hence, if we let $\eta\geq 0$ be a smooth cutoff function such that $\eta=1$ on $B_{r}$ and $\eta=0$ outside $B_{2r}$, there exists $C(\lambda,D)$ such that
\begin{align}
\|\nabla^{2}(\eta S_{ij})\|_{L^{2}(B_{r})}&\leq C\Big(\|\Delta (\eta S_{ij}) \|_{L^{2}(B_{2r})}+\|\eta S_{ij}\|_{L^{2}(B_{2r})}\Big)\notag\\
&\leq C\Big(\|\eta Rm\|_{L^{2}(B_{2r})}+\|\eta \nabla Rm\|_{L^{2}(B_{2r})}+Vol(B_{4r})^{\frac{1}{2}}\Big).
\end{align}
Hence, if we set $H= \nabla^{2}Ric$, we have
$$\|\eta H \|_{L^{2}(B_{r})}\leq C\Big(\|\nabla^{2}(\eta S_{ij})\|_{L^{2}(B_{r})}+Vol(B_{2r})^{\frac{1}{2}}\Big).$$
Therefore,
\begin{equation}\label{est2}
\|\eta H\|_{L^{2}(B_{r})}\leq C(\|\eta Rm\|_{L^{2}(B_{2r})}+\|\eta \nabla Rm\|_{L^{2}(B_{2r})}+Vol(B_{4r})^{\frac{1}{2}}).
\end{equation}
We go back now to the equation (\ref{laplacianRm}) for $Rm$. We have
$$\Delta (\eta Rm)=\eta Rm*Rm+\eta H+\nabla Rm*\nabla \eta .$$
Thus, we get
\begin{align}
\|\nabla (\eta Rm)\|_{L^{2}(B_{2r})}^{2}\leq C\Big(&\|\eta Rm\|_{L^{4}(B_{2r})}\| Rm\|_{L^{2}(B_{2r})}^{2}+\|\eta Rm\|_{L^{2}(B_{2r})}\|\eta H\|_{L^{2}(B_{2r})}\notag\\
&+\|Rm\|_{L^{2}(B_{2r})}\|\eta\nabla Rm\|_{L^{2}(B_{2r})}\Big).\notag
\end{align}
Now using estimate $(\ref{est2})$,
\begin{align}
\|\nabla (\eta Rm)\|_{L^{2}(B_{2r})}^{2}\leq C\Big(&\|Rm\|_{L^{2}(B_{2r})}\|\eta Rm\|_{L^{4}(B_{2r})}^{2}+\|Rm\|_{L^{2}(B_{2r})}\|\nabla (\eta Rm)\|_{L^{2}(B_{2r})}\notag\\
&+\|Rm\|_{L^{2}(B_{2r})}\Big[\|Rm\|_{L^{2}(B_{2r})}+Vol(B_{8r})^{\frac{1}{2}}\Big]\Big),\notag
\end{align}
where we used above that $\nabla (\eta Rm)=\eta \nabla Rm +(\nabla \eta) * Rm$ and $supp(\eta)\subset B_{2r}$.
Using the inequality $2ab\leq \alpha a^{2}+\frac{1}{\alpha}b^{2}$ for a positive $\alpha$ chosen so that $C\alpha <1$, we have the existence of another constant $C(\lambda,K, D)$ such that
\begin{align}\label{est3}
\|\nabla (\eta Rm)\|_{L^{2}(B_{2r})}^{2}\leq & C\Big(\|Rm\|_{L^{2}(B_{2r})}\|\eta Rm\|_{L^{4}(B_{2r})}^{2}\notag\\
&+\|Rm\|_{L^{2}(B_{2r})}\Big[\|Rm\|_{L^{2}(B_{2r})}+Vol(B_{8r})^{\frac{1}{2}}\Big]\Big).
\end{align}
By the Sobolev inequality we have
$$\|\eta Rm\|_{L^{4}(B_{2r})}^{2}\leq C\Big(\|Rm\|_{L^{2}(B_{2r})}\|\eta Rm\|_{L^{4}(B_{2r})}^{2}
+\|Rm\|_{L^{2}(B_{2r})}\Big[\|Rm\|_{L^{2}(B_{2r})}+Vol(B_{8r})^{\frac{1}{2}}\Big]\Big).$$
In particular, there exists $\varepsilon_{0}=\varepsilon_{0}(\lambda,n,D)>0$ such that if $\|Rm\|_{L^{2}(B_{2r})}<\varepsilon_{0}$, then
$$\|Rm\|_{L^{4}(B_{r})}\leq C\Big(\|Rm\|_{L^{2}(B_{2r})}+Vol(B_{8r})^{\frac{1}{2}}\Big).$$
Also, from $(\ref{est3})$, we obtain
$$\|\nabla  Rm\|_{L^{2}(B_{r})}^{2}\leq C\Big(\|Rm\|_{L^{2}(B_{2r})}^{2}+\|Rm\|_{L^{2}(B_{2r})}+Vol(B_{8r})+Vol(B_{8r})^{\frac{1}{2}}\Big).$$
\end{proof}

\noindent
Now we recall the following classical lemma (see \cite[Lemma~4.6]{BKN} for a proof) that we use in the next proposition:
\begin{lemma}\label{classicallemma}
Consider two functions $u\in L^{2}_{loc}$ and $f\in L^{4}_{loc}$ such that $u\geq 0$ and $f\geq 0$ and
$$\Delta u\geq -fu.$$
Then there exists $\varepsilon>0$ and $r>0$ sufficiently small, such that if $\int_{B_{r}}f^{4}<\varepsilon$ then
$$\sup_{B_{\frac{r}{2}}}u\leq C\left(\int_{B_{r}}f^{4}\right)^{\frac{1}{4}}.$$
\end{lemma}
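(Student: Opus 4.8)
The plan is to prove this as a De Giorgi--Nash--Moser local boundedness estimate for the nonnegative subsolution $u$ of the Schr\"odinger-type inequality $\Delta u + fu\ge 0$, with the integrability of $f$ serving to absorb the zeroth order potential along the iteration. Since the lemma is used in dimension $n=4$, the potential $f$ lies in $L^4$, which is strictly better than the critical space $L^{n/2}=L^2$; this subcriticality is what drives the scheme, while the smallness $\int_{B_r}f^4<\varepsilon$ keeps all constants uniform. The uniform lower bound on the Sobolev constant established in Proposition~\ref{propric} guarantees that the Sobolev inequality on small balls holds with a controlled constant $C_S$, so that the output constant depends only on $\lambda,n,K,D$.

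First I would fix $r$ small, a nested family of balls between $B_{r/2}$ and $B_r$, a cutoff $\eta$, and for each exponent $\beta\ge 1$ test the differential inequality against $\varphi=\eta^2u^{2\beta-1}$. Setting $w=u^\beta$ and using $\int\nabla u\cdot\nabla\varphi\le\int fu\varphi$, the gradient cross term is absorbed by Young's inequality to yield the Caccioppoli estimate
\[
\int\eta^2|\nabla w|^2\le C\beta^2\Big(\int|\nabla\eta|^2w^2+\int\eta^2 f\,w^2\Big).
\]
The decisive term is $\int\eta^2 f\,w^2$, and here the hypotheses enter: by H\"older with $f\in L^4$ one has $\int\eta^2 f\,w^2\le\|f\|_{L^4(B_r)}\|\eta w\|_{L^{8/3}}^2$, and interpolating $L^{8/3}$ between $L^2$ and the Sobolev-critical $L^4$ (the exponent works out to $\theta=\tfrac12$) followed by Young's inequality gives, for any $\delta>0$,
\[
\int\eta^2 f\,w^2\le \delta\,\|\nabla(\eta w)\|_{L^2}^2+\frac{C\|f\|_{L^4(B_r)}^2}{\delta}\,\|\eta w\|_{L^2}^2 .
\]
Choosing $\delta$ small compared to $C\beta^2 C_S$ absorbs the first piece into the left-hand side, while the second is a harmless lower-order $L^2$ term whose coefficient stays controlled because $\|f\|_{L^4(B_r)}^2<\varepsilon^{1/2}$. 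I expect this absorption, kept uniform as $\beta$ grows, to be the main technical obstacle.

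Once the reverse-Sobolev inequality $\|u^{\beta}\|_{L^4(B_{\rho'})}^2\le C(1+\beta^2)(\rho-\rho')^{-2}\|u^{\beta}\|_{L^2(B_\rho)}^2$ is in hand, I would iterate over $\beta_m=2^m$ with the balls shrinking to $B_{r/2}$, take products of the resulting $L^{2\beta_m}$ bounds, and pass to the limit $m\to\infty$ in the usual way to obtain
\[
\sup_{B_{r/2}}u\le C\Big(\int_{B_r}u^2\Big)^{1/2},
\]
with $C=C(\lambda,n,K,D)$. The right-hand side is necessarily the $L^2$ energy of $u$, as forced by the invariance of $\Delta u\ge -fu$ under $u\mapsto\mu u$. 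To recover the form in the statement, note that the lemma is invoked with the potential dominating the subsolution pointwise (in the application $u=|Rm|$ and $f$ is proportional to $|Rm|$, so $u\le Cf$); a concluding H\"older step then gives $\big(\int_{B_r}u^2\big)^{1/2}\le\vol(B_r)^{1/4}\big(\int_{B_r}u^4\big)^{1/4}\le C\big(\int_{B_r}f^4\big)^{1/4}$, which is the asserted estimate.
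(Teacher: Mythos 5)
Your Moser iteration is, in substance, the proof behind the paper's citation rather than a different route: the paper does not prove this lemma at all, but quotes it as classical and defers to \cite[Lemma~4.6]{BKN}, and the argument there is precisely the scheme you describe --- Caccioppoli with test function $\eta^2 u^{2\beta-1}$, absorption of the potential term via H\"older, interpolation and the Sobolev inequality (with the uniform lower bound on the Sobolev constant from Proposition \ref{propric}, and with your correct observation that in dimension $4$ the $L^4$ hypothesis on $f$ is subcritical relative to $L^{n/2}=L^2$), then iteration over $\beta_m=2^m$ to reach $\sup_{B_{r/2}}u\le C\|u\|_{L^2(B_r)}$. The only technical point to add is a truncation $u_k=\min(u,k)$, or an a priori bootstrap of local integrability, to legitimize the test function when $u$ is merely $L^2_{loc}$; this is routine.

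Where your write-up goes astray is the final normalization step, and the error lies in the claimed application, not in the analysis. You rightly diagnose that the statement as printed is scale-inconsistent (it fails for $f\equiv 0$ and $u$ a positive constant), and indeed in \cite{BKN} the right-hand side carries the $L^2$ norm of $u$; the paper's transcription dropped it. But your repair --- ``in the application $u=|Rm|$ and $f$ is proportional to $|Rm|$, so $u\le Cf$'' --- does not match the paper: in Proposition \ref{propreg} the lemma is invoked with $u=|Rm|^2+1$ and $f=C(|Rm|+|H|)$, so $u\le Cf$ is false pointwise, and since $u\ge 1$ the literal conclusion $\sup_{B_{r/2}}u\le C\bigl(\int_{B_r}f^4\bigr)^{1/4}\le C\varepsilon_1^{1/4}$ could not even be consistent with smallness of $\varepsilon_1$. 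What the paper actually needs, and what your iteration already delivers, is the estimate $\sup_{B_{r/2}}u\le C\|u\|_{L^2(B_r)}$, whose right-hand side is uniformly bounded because $\|u\|_{L^2(B_r)}\le \|Rm\|_{L^4(B_r)}^{2}+\mathrm{vol}(B_r)^{1/2}$ is controlled by inequality (\ref{ineq1}) of Proposition \ref{proprim} together with the volume bounds; this yields $\sup_{B_{r/2}}\bigl(|Rm|^2+1\bigr)\le C(\lambda,K,D)$, which is exactly the conclusion stated in Proposition \ref{propreg}. So keep your iteration, state the lemma with $\|u\|_{L^2(B_r)}$ on the right as in \cite{BKN}, and delete the concluding H\"older step built on the incorrect pointwise comparison.
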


\noindent
We have then
\begin{proposition}\label{propreg}[$\varepsilon$-regularity]
There exist $\varepsilon_{1}(\lambda,K,D)>0$ and $0<r_{0}<1$ such that, if
$$\int_{B_{16r}}|Rm|^{2}dv<\varepsilon_{1}, \quad r<r_{0}, $$
then there exists $C(\lambda,K,D)$ such that
$$\sup_{B_{\frac{r}{2}}}|Rm|\leq C(\lambda,K,D).$$
\end{proposition}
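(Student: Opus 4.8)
The plan is to reduce the statement to the Moser-type sup estimate of Lemma \ref{classicallemma}, applied to $u=|Rm|$. The first task is therefore to produce a scalar differential inequality of the form $\Delta|Rm|\geq -f|Rm|$ out of the tensorial equation (\ref{laplacianRm}). Starting from $\tfrac12\Delta|Rm|^{2}=\langle\Delta Rm,Rm\rangle+|\nabla Rm|^{2}$ and Kato's inequality $|\nabla|Rm||\leq|\nabla Rm|$, one gets, away from the zero set of $Rm$,
$$\Delta|Rm|\geq\frac{\langle\Delta Rm,Rm\rangle}{|Rm|}\geq -c_{n}|Rm|^{2}-|\nabla^{2}Ric|,$$
where the quadratic term comes from $\langle Rm*Rm,Rm\rangle\geq -c_{n}|Rm|^{3}$ with $c_{n}$ depending only on $n$. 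The term $c_{n}|Rm|^{2}=(c_{n}|Rm|)|Rm|$ already has the multiplicative structure needed, contributing $c_{n}|Rm|$ to the coefficient $f$; its fourth power integrates to $c_{n}^{4}\|Rm\|_{L^{4}}^{4}$, which by Proposition \ref{proprim} is controlled by $\|Rm\|_{L^{2}(B_{2r})}$ and $vol(B_{8r})^{1/2}$. Choosing $\varepsilon_{1}$ and $r_{0}$ small, and using the volume bound from Proposition \ref{propric} (so that $vol(B_{8r})\lesssim r^{4}\to 0$), this part of $\int f^{4}$ is made arbitrarily small.

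The essential point is then to control the inhomogeneous term $\nabla^{2}Ric$, which is exactly what distinguishes the present situation from the classical Einstein case, where $\nabla^{2}Ric=0$ and the argument closes immediately. I would rewrite $Ric=T^{g,\psi}+\tfrac{R}{2}g$ and use $R=\tfrac{\lambda}{n-2}|\psi|^{2}$ together with the definition (\ref{energy-momentum tensor}), so that $\nabla^{2}Ric$ is expressed through $\psi$ and its covariant derivatives up to order three. The inputs are the pointwise bounds $\|\psi\|_{\infty}\leq C$ and $\|\nabla\psi\|_{\infty}\leq C$ from the preceding lemmas, the identity $\nabla^{*}\nabla\psi=(\lambda^{2}-\tfrac{R}{4})\psi$ coming from (\ref{Schrodinger-Lichnerowicz}) and $D^{2}\psi=\lambda^{2}\psi$, and elliptic regularity for the Dirac equation $D\psi=\lambda\psi$. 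Since $Ric\in L^{\infty}$ and the Sobolev constant is bounded below, I would work in harmonic coordinates in which $g$ enjoys $C^{1,\alpha}$ and $W^{2,p}$ bounds and bootstrap to obtain $\nabla^{2}\psi,\nabla^{3}\psi\in L^{4}_{loc}$, with norms over $B_{r}$ carrying positive powers of $vol(B_{r})$; this yields $\|\nabla^{2}Ric\|_{L^{4}(B_{r})}\to 0$ as $r\to 0$.

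To fit the additive term $|\nabla^{2}Ric|$ into the homogeneous form required by Lemma \ref{classicallemma}, I would apply the lemma to $u=|Rm|+a$ for a fixed $a>0$, which removes the division by $|Rm|$:
$$\Delta u=\Delta|Rm|\geq -\big(c_{n}|Rm|+\tfrac{|\nabla^{2}Ric|}{a}\big)u=: -f\,u,$$
with $f=c_{n}|Rm|+|\nabla^{2}Ric|/a\in L^{4}_{loc}$. By the two estimates above, $\int_{B_{r}}f^{4}$ can be pushed below the threshold $\varepsilon$ of Lemma \ref{classicallemma} by choosing $\varepsilon_{1}$ and then $r_{0}$ small. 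Applying the lemma on the pair $B_{r}\supset B_{r/2}$, having used Proposition \ref{proprim} one scale up (so that $\int_{B_{16r}}|Rm|^{2}<\varepsilon_{1}$ feeds the $L^{4}$ bound on $B_{8r}$), gives $\sup_{B_{r/2}}|Rm|\leq\sup_{B_{r/2}}u\leq C(\int_{B_{r}}f^{4})^{1/4}\leq C(\lambda,K,D)$.

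I expect the main obstacle to be precisely the control of $\nabla^{2}Ric$, that is, the genuine coupling between the metric and the spinor. Bounding it in $L^{4}$ over small balls with a quantitatively small norm requires the full strength of the elliptic bootstrap for the Dirac equation together with the previously established uniform geometry (bounded $Ric$, lower Sobolev and volume bounds). Some care is also needed near the zero set of $Rm$, where the Kato-based inequality should be read in the distributional or barrier sense; the shift by $a$ is what makes the reduction to the homogeneous hypothesis of Lemma \ref{classicallemma} clean.
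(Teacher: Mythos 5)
Your reduction to Lemma \ref{classicallemma} is sound and is essentially the paper's: the paper takes $u=|Rm|^{2}+1$, obtaining $\Delta u\geq -C(|Rm|+|H|)u$ with $H=\nabla^{2}Ric$, which plays exactly the role of your shift $u=|Rm|+a$ (both fold the additive term into a multiplicative $f\in L^{4}$, and the paper's choice also sidesteps the Kato/zero-set issue). The genuine gap is in the step you yourself flag as essential: the $L^{4}$ control of $\nabla^{2}Ric$. Your claim that an elliptic bootstrap gives $\nabla^{2}\psi,\nabla^{3}\psi\in L^{4}_{loc}$ with norms carrying only positive powers of $vol(B_{r})$, hence $\|\nabla^{2}Ric\|_{L^{4}(B_{r})}\to 0$ as $r\to 0$, cannot be correct as stated: commuting derivatives through the equation produces curvature terms, $\Delta \nabla_{i}\psi=\nabla_{i}\Delta\psi+Rm*\nabla\psi+\nabla Rm*\psi$, so any bound on $\|\nabla^{3}\psi\|_{L^{2}}$ --- and, after Sobolev, on $\|\nabla^{2}\psi\|_{L^{4}}$ --- necessarily carries $\|Rm\|_{L^{2}}$ and $\|\nabla Rm\|_{L^{2}}$ (this is the paper's estimate (\ref{est2psi})); moreover differentiating $T^{g,\psi}$ twice brings in $\nabla^{2}Rm$, which the paper must bound back through $H$ itself via $\|\nabla^{2}Rm\|_{L^{2}}\leq C(\|Rm\|_{L^{4}}^{2}+\|H\|_{L^{2}})$ before closing the loop in (\ref{est2s}). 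These curvature terms are not small because $r$ is small; they are small only because the hypothesis $\int_{B_{16r}}|Rm|^{2}<\varepsilon_{1}$ is fed through Proposition \ref{proprim} (inequalities (\ref{ineq1}) and (\ref{ineq2})) --- which is precisely why the hypothesis lives on $B_{16r}$: the chain of estimates consumes the scales $B_{2r},B_{4r},B_{8r},B_{16r}$. Your smallness mechanism ($vol(B_{8r})\lesssim r^{4}$) handles only the volume terms.

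The second problem is the tool you invoke for the bootstrap: uniform harmonic coordinates with $C^{1,\alpha}$ and $W^{2,p}$ bounds at a definite scale are not available at this stage of the argument. In dimension $4$, with only $|Ric|$ bounded and volume non-collapsing, a uniform lower bound on the harmonic radius is essentially equivalent to the $\varepsilon$-regularity statement you are proving (this is Anderson's point), so using it here is circular; in the paper the harmonic atlas is constructed only \emph{afterwards}, on the good set $G_{i}(r)$, using Propositions \ref{propric} and \ref{propreg}. The paper avoids coordinates entirely at this step: the elliptic estimates for $\nabla^{2}\psi$, $\nabla^{3}\psi$ and $\nabla^{2}S_{ij}$ are derived from the Calder\'on--Zygmund inequality of \cite{CZ}, which needs only the uniform Ricci bound and the lower Sobolev bound already secured in Proposition \ref{propric}. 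With that replacement, and with the curvature couplings tracked through Proposition \ref{proprim} as described above, your argument closes and coincides in substance with the paper's proof.
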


\begin{proof}
The proof is based on the previous Lemma \ref{classicallemma}, considering $u=|Rm|^{2}+1$. Since
$$\Delta u\geq -C(|Rm|+|H|)u,$$
we need an estimate for $\||Rm|+|H|\|_{L^{4}(B_{r})}$. Proposition \ref{proprim} provides us with an estimate for $\|Rm\|_{L^{4}(B_{r})}$, so we will estimate $\|H\|_{L^{4}(B_{r})}$.\\
First, we use the commutation relation between $\Delta$ and $\nabla_{i}$ to write
$$\Delta \nabla_{i}\psi=\nabla_{i}\Delta \psi +Rm*\nabla \psi+\nabla Rm *\psi.$$
Hence, again using \cite{CZ}, we get
$$\|\nabla^{3}\psi\|_{L^{2}(B_{r})}\leq C(\|\Delta \nabla \psi\|_{L^{2}(B_{2r})}+\|\nabla \psi\|_{L^{2}(B_{2r})}),$$
therefore, by Sobolev inequality again, we have
\begin{equation}\label{est2psi}
\|\nabla^{2}\psi\|_{L^{4}(B_{r})}^{2}\leq C\Big[Vol(B_{2r})+\|Rm\|_{L^{2}(B_{2r})}^{2}+\|\nabla Rm\|_{L^{2}(B_{2r})}^{2}\Big],
\end{equation}
where we have used the fact that $-\Delta \psi +\frac{R}{4}\psi=\lambda^{2}\psi$ and the uniform boundedness of $\psi, R$ and $\nabla \psi$. This provides an estimate for the terms of $H$ containing $\nabla^{2}R$. Now we compute $\Delta \nabla_{k} S_{ij}$. So we start by noticing
\begin{align}
\nabla_{k} \Delta S_{ij}= & \nabla R*S_{ij}+\nabla Rm * \nabla \psi * \psi + Rm*\nabla^{2}\psi *\psi \notag\\
&+ Rm*\nabla \psi*\nabla\psi+\nabla^{2}Rm*\psi * \psi+\nabla^{3}\psi*\nabla \psi + \nabla^{2}\psi * \nabla^{2}\psi .\notag
\end{align}
Hence
\begin{align}
\|\nabla_{k} \Delta S_{ij}\|_{L^{2}(B_{r})}\leq & C\Big(Vol(B_{r})^{\frac{1}{2}}+\|\nabla Rm\|_{L^{2}(B_{r})}+\|Rm\|_{L^{4}(B_{r})}\|\nabla^{2}\psi\|_{L^{4}(B_{r})}\notag\\
&+\|\nabla^{2}Rm\|_{L^{2}(B_{r})}+\|\nabla^{3}\psi\|_{L^{2}(B_{r})}+\|\nabla^{2}\psi\|_{L^{4}(B_{r})}^{2}\Big).\notag
\end{align}
But
\begin{align}
\|\nabla^{2}Rm\|_{L^{2}(B_{r})}&\leq C(\|Rm\|_{L^{4}(B_{2r})}^{2}+\|H\|_{L^{2}(B_{2r})})\notag\\
&\leq C(\|Rm\|_{L^{2}(B_{4r})}^{2}+\|\nabla Rm\|_{L^{2}(B_{4r})}+\|Rm\|_{L^{2}(B_{4r})}+Vol(B_{8r})^{\frac{1}{2}}+Vol(B_{16r})).\notag
\end{align}
Therefore
\begin{align}
\|\nabla_{k} \Delta S_{ij}\|_{L^{2}(B_{r})}\leq& C\Big[ \|\nabla Rm\|_{L^{2}(B_{4r})}+\|Rm\|_{L^{2}(B_{4r})}^{2}+\|\nabla Rm\|_{L^{2}(B_{4r})}^{2}\notag\\
&+ \|Rm\|_{L^{4}(B_{r})}\Big(vol(B_{2r})^{\frac{1}{2}}+\|Rm\|_{L^{2}(B_{2r})}+\|\nabla Rm\|_{L^{2}(B_{2r})}\Big)\notag\\
&+\|Rm\|_{L^{2}(B_{4r})}+Vol(B_{16r})^{\frac{1}{2}}\Big]. \notag
\end{align}
Using,
$$\Delta \nabla_{k} S_{ij}=\nabla_{k}\Delta S_{ij}+Rm*\nabla S_{ij}+\nabla Rm * \psi,$$
we get that
\begin{align}
\|\Delta \nabla_{k} S_{ij}\|_{L^{2}(B_{r})}\leq & C\Big[ \|\nabla Rm\|_{L^{2}(B_{4r})}+\|Rm\|_{L^{2}(B_{4r})}^{2}+\|\nabla Rm\|_{L^{2}(B_{4r})}^{2}\notag\\
&+ \|Rm\|_{L^{4}(B_{r})}\Big(vol(B_{2r})^{\frac{1}{2}}+\|Rm\|_{L^{2}(B_{2r})}+\|\nabla Rm\|_{L^{2}(B_{2r})}\Big)\notag\\
&+\|Rm\|_{L^{2}(B_{4r})}+Vol(B_{16r})^{\frac{1}{2}}\Big]. \notag
\end{align}
Therefore, we have
\begin{align}
\|\nabla^{3} S_{ij}\|_{L^{2}(B_{r})}\leq& C\Big[ \|\nabla Rm\|_{L^{2}(B_{8r})}+\|Rm\|_{L^{2}(B_{8r})}^{2}+\|\nabla Rm\|_{L^{2}(B_{8r})}^{2}\notag\\
&+ \|Rm\|_{L^{4}(B_{2r})}\Big(vol(B_{4r})^{\frac{1}{2}}+\|Rm\|_{L^{2}(B_{4r})}+\|\nabla Rm\|_{L^{2}(B_{4r})}\Big)\notag\\
&+\|Rm\|_{L^{2}(B_{8r})}+Vol(B_{32r})^{\frac{1}{2}}\Big]. \notag
\end{align}
Thus, Sobolev inequality yields
\begin{align}\label{est2s}
\|\nabla^{2} S_{ij}\|_{L^{4}(B_{r})}\leq& C\Big[ \|\nabla Rm\|_{L^{2}(B_{8r})}+\|Rm\|_{L^{2}(B_{8r})}^{2}+\|\nabla Rm\|_{L^{2}(B_{8r})}^{2}\notag\\
&+ \|Rm\|_{L^{4}(B_{2r})}\Big(vol(B_{4r})^{\frac{1}{2}}+\|Rm\|_{L^{2}(B_{4r})}+\|\nabla Rm\|_{L^{2}(B_{4r})}\Big)\notag\\
&+\|Rm\|_{L^{2}(B_{8r})}+Vol(B_{32r})^{\frac{1}{2}}\Big],
\end{align}
and combining $(\ref{est2s})$ and $(\ref{est2psi})$ we obtain
\begin{align}
\|H\|_{L^{4}(B_{r})}\leq& C\Big[ \|\nabla Rm\|_{L^{2}(B_{8r})}+\|Rm\|_{L^{2}(B_{8r})}^{2}+\|\nabla Rm\|_{L^{2}(B_{8r})}^{2}\notag\\
&+ \|Rm\|_{L^{4}(B_{2r})}\Big(vol(B_{4r})^{\frac{1}{2}}+\|Rm\|_{L^{2}(B_{4r})}+\|\nabla Rm\|_{L^{2}(B_{4r})}\Big)\notag\\
&+\|Rm\|_{L^{2}(B_{8r})}+Vol(B_{32r})^{\frac{1}{2}}\Big]. \notag
\end{align}
Now using $(\ref{ineq1})$ and $(\ref{ineq2})$ of Proposition \ref{proprim}, assuming that $\|Rm\|_{L^{2}(B_{2r})}<\varepsilon_{0}$, we have
\begin{align}
\|H\|_{L^{4}(B_{r})}\leq& C\Big( Vol(B_{32r})^{\frac{1}{2}}+Vol(B_{32r})+Vol(B_{32r})^{2}+\|Rm\|_{L^{2}(B_{16r})}+\|Rm\|^{2}_{L^{2}(B_{16r})}\notag\\
&+\|Rm\|_{L^{2}(B_{16r})}^{3}+\|Rm\|_{L^{2}(B_{16r})}^{4}\Big).
\end{align}
Hence, there exist $\varepsilon_{1}(\lambda,K,D)$ and $r_{0}(\lambda,K.D)$ such that if $\|Rm\|_{L^{2}(B_{16r})}<\varepsilon_{1}$ and $r<r_{0}$, then
$\||Rm|+|H|\|_{L^{4}(B_{r})}<\varepsilon$. Thus, it follows from Lemma \ref{classicallemma} that
$$\sup_{B_{\frac{r}{2}}}u\leq C(\lambda,K,D).$$
\end{proof}


\noindent
Let us recall now the following definition:
\begin{definition}
Let $(M,g)$ be a Riemannian manifold. We say that $(M,g)$ has an adapted $(r, N,C^{\ell,\alpha})$ (resp. $(r,N,W^{k,p})$) harmonic atlas, if there is a covering $(B(x_{k},r))_{1\leq k\leq N}$ of $M$ made of geodesic balls of radius $r$, for which the balls $B(x_{k},\frac{r}{2})$ cover $M$ and $B(x_{k},\frac{r}{4})$ are disjoint, such that each $B(x_{k},10r)$ has a harmonic coordinate chart $(U_{k},\phi_{k})$, such that the metric tensor in these coordinates is $C^{\ell,\alpha}$ (resp. $W^{k,p}$) and on $B(x_{k},10r)$, there exists a constant $C>1$ such that
$$C^{-1}\delta_{ij}\leq g_{ij}\leq C\delta_{ij}$$
and
$$r^{\ell+\alpha}\|g_{ij}\|_{C^{\ell,\alpha}}\leq C, \quad (\text{ resp. } r^{k-\frac{p}{n}}\|g_{ij}\|_{W^{k,p}}\leq C ).$$
\end{definition}

\noindent
The $C^{\ell,\alpha}$ (resp. $W^{k,p}$)- harmonic radius at $x$, $r_{h}(x)$ is the maximum radius $r$ for which $B_{x}(r)$ has harmonic coordinates.\\
Now we will complete the proof of Theorem \ref{compactnessresults}.

\begin{proof}(of Theorem \ref{compactnessresults}, case $n=4$)
First we notice that if $(g,\psi)\in \mathcal{E}(D,c,K)$ then
\begin{equation}\label{volumeboundedbelow}
c=\int_{M}R_{g}dv_{g}\leq\frac{\lambda}{2}\|\psi\|_{\infty}^{2}vol(M,g)\leq 4\lambda^{2}vol(M,g),
\end{equation}
therefore the volume is uniformly bounded from below. Now given a sequence $(g_{i},\psi_{i})\in \mathcal{E}(D,c,K)$, then, combining the lower bound on the volume with the uniform bound on $Ric_{g}$, $\|Rm\|_{L^{2}}$ and the diameter assumption, we have from \cite[Theorem~2.6]{And1} the convergence of a subsequence of $(M,g_{i})$ in the Gromov-Hausdorff topology to a four dimensional orbifold $(M_{\infty},g_{\infty})$ with finitely many singularities. We can assume without loss of generality that $M_{\infty}\setminus S \hookrightarrow M$. Moreover the convergence to $g_{\infty}$ is in $C^{1,\alpha}$ away from the singularities in the Cheeger-Gromov sense. Also, around the singularity the metric blows up to an ALE (Asymptotically Locally Euclidean) manifold (see \cite[Theorem~1.2]{And1} and the definition therein). We will denote by $S$ the set of singularities and $M^{0}=M_{\infty}\setminus S$ that we identify with $M\setminus S$.

\noindent
Let $r<\frac{r_{0}}{4}$ as defined in Proposition \ref{propreg} and let us consider a covering of $(M,g_{i})$ by balls $B(x_{k},r)$ such that $B(x_{k},\frac{r}{2})$ are disjoint. We let
$$I=\left\{k \in \N; \int_{B(x_{k},16r)}|Rm_{g_{i}}|^{2}dv_{g_{i}}<\varepsilon_{1} \right\},$$
and we define
$$G_{i}(r)=\bigcup_{k\in I}B(x_{k},r).$$
Similarly, we let
$$H_{i}(r)=\bigcup \left\{B(x_{k},r); \int_{B(x_{k},16r)}|Rm_{g_{i}}|_{g_{i}}^{2}\ dv_{g_{i}}\geq \varepsilon_{1}\right\}.$$
Notice that $M_{i}=H_{i}(r)\cup G_{i}(r)$. This is a splitting of $M_{i}$ into a good set where one can control the curvature and a bad set where there is no $L^{\infty}$ control on the curvature. In fact, following \cite{And0,N}, one can show that the number of balls in the definition of $H_{i}(r)$ is finite and uniformly bounded in $i$ and $r<\frac{r_{0}}{4}$.\\
By Propositions \ref{propric} and \ref{propreg}, one has a uniform lower bound on the injectivity radius of $G_{i}$ (see for instance \cite[Section~3]{Ch}). In particular $(G_{i}(r),g_{i})$ has an adapted $(r,N,W^{2,p})$ harmonic atlas with a uniform lower bound on the injectivity radius. Now in a harmonic coordinate patch, we recall that the metric satisfies the quasilinear elliptic equation
$$ -\Delta_{g_{i}}g_{i}+\partial g_{i}*g_{i}=Ric_{g_{i}}.$$
From the first equation in the system (\ref{eq1}), any $g_i$ is a solution of
$$Ric_{g_{i}}=\frac{R_{g_{i}}}{2}\psi_{i}+T^{g_{i},\psi_{i}},$$
and since $\psi_{i}$ satisfies
$$-\Delta_{g_{i}}\psi_{i}+\frac{R_{g_{i}}}{2}\psi_{i}=\lambda^{2}\psi_{i},$$
we have a uniform $C^{1,\alpha}$ bound on $\psi_{i}$ leading to a $C^{0,\alpha}$ bound on $T^{g_{i},\psi_{i}}$. Therefore, we have a uniform $C^{2,\alpha}$ bound on $g_{i}$. This can be naturally iterated to get a $C^{\ell,\alpha}$ bound on $g_{i}$. Therefore, there exists a subsequence of $(G_{i}(r),g_{i})$ that converges in the $C^{\infty}$ topology  to an open manifold $(G_{r},g_{\infty}^{r})$ and $\psi_{i}\to \psi_{\infty}^{r}$ in the $C^{\infty}$ topology.\\
We choose now a sequence $r_{j}\to 0$ such that $r_{j+1}\leq \frac{1}{2}r_{j}$ and we let
$$G_{i}^{j}=\{x\in M; x\in G_{i}(r_{m}) \text{ for some } m\leq j\}.$$
Then we have
$$G_{i}^{1}\subset G_{i}^{2}\subset \cdots \subset (M,g_{i}).$$
We also have that $G_{r_{j}}\subset G_{r_{j+1}}$. So we set $M^{0}=\cup_{j=1}^{\infty}G_{r_{j}}\subset M$. Now using a diagonal argument, we can extract a subsequence of $(G_{i}^{j},g_{i},\psi_{i})$ that converges in $C^{\ell,\alpha}$ to $(M^{0},g_{\infty},\psi_{\infty})$ for all $\ell$. This proves the convergence away from a finite set of points, namely, the singular set. It remains then to study the convergence in the neighborhood of points in this latter set. The singular set $S$ can then be defined by $S=M_{\infty}\setminus M^{0}$. In particular, we can characterize $S$ by
$$S=\left\{\begin{array}{ll}
p\in M, \text{ such that there exists } x_{k}\in M, r>0, \varepsilon_{1}>0 \text{ with} \\
\\
x_{k}\to p \text{ and} \displaystyle \liminf_{r\to 0} \liminf_{k\to \infty}\int_{B(x_{k},r)}|Rm_{g_{k}}|^{2}dv_{g_{k}}\geq \varepsilon_{1}
\end{array}\right\}.$$
Now let us take a point $p_{0}\in S$. We know that there exists  $x_{k}\in (M,g_{k})$ such that $x_{k}\to p_{0}$. Since $S$ is finite, we pick $\delta>0$ so that $B(p_{0},2\delta)\cap S=\{p_{0}\}$ and let
$$r_{k}=\sup_{x\in B(x_{k},\delta)}|Rm_{g_{k}}|,$$
and we notice that $r_{k}\to \infty$. We can then, consider a sequence of pointed Riemannian manifolds $(M,\tilde{g}_{k},x_{k})$ where $\tilde{g}_{k}=r_{k}g_{k}$. Under this new rescaled metric $\tilde{g}_{k}$ we have:
\begin{equation}\label{equalim}
\left\{\begin{array}{ll}
Ric_{\tilde{g}_{k}}-\displaystyle\frac{R_{\tilde{g}_{k}}}{2}\tilde{g}_{k}=\sqrt{r_{k}}T^{\tilde{g}_{k},\psi}\\
\\
D_{\tilde{g}_{k}}\psi=\displaystyle\frac{\lambda}{\sqrt{r_{k}}}\psi.
\end{array}
\right.
\end{equation}
Using the same procedure as above, knowing that now we have $Rm_{\tilde{g}}$ uniformly bounded in every ball $B_{r}\subset B(x_{k},r_{k}\delta)$ we have the convergence of the space $(M,\tilde{g}_{k},x_{k})$ to $(Y,\overline{g},x_{\infty})$ in the pointed Hausdorff topology. In fact, as in \cite{N}, one can show that for every ball $B(x_{\infty},r)\subset Y$, there exists a smooth diffeomorphism $\Phi_{k}:B(x_{\infty},r)\to M$, so that $\Phi_{k}^{*}\tilde{g}_{k}$ converges to $\overline{g}$ in $C^{\ell,\alpha}(B(x_{\infty},r))$ for all $\ell \in \N$. Moreover, since $|Ric_{\tilde{g}_{k}}|^{2}=\frac{1}{r_{k}^{2}}|Ric_{g_{k}}|^{2}$, we see that $\overline{g}$ is indeed a Ricci-flat, non-flat metric and from equation $(\ref{equalim})$, $\psi_{k}$ converges to a harmonic spinor $\psi_{\infty}$ in $C^{\ell,\alpha}(B(x_{\infty},r))$ for every $r>0$. That is on $(Y,\overline{g})$ we have that
$$D_{\overline{g}}\psi_{\infty}=0.$$
Using a result of \cite[Lemma~5.1]{KN}, we know that there $\psi_{\infty}$ is asymptotic to a parallel spinor at infinity. Next, using \cite[Theorem~1.5]{BKN}, we have that the mass of $(Y,\overline{g})$ is zero. Thus, we see that $\psi_{\infty}$ is actually parallel.\\
To prove the last point of Theorem \ref{compactnessresults}, we use again the splitting $M=G_{i}(r)\cup H_{i}(r)$. Thus we have
$$\int_{M}|Rm_{g_{i}}|^{2}_{g_{i}}\ dv_{g_{i}}=\int_{G_{i}(r)}|Rm_{g_{i}}|^{2}_{g_{i}}\ dv_{g_{i}}+\int_{H_{i}(r)}|Rm_{g_{i}}|^{2}_{g_{i}}\ dv_{g_{i}}.$$
First we notice that since the convergence of the metric is in the $C^{k,\alpha}$ sense in $G_{i}(r)$,
$$\liminf_{i\to \infty}\int_{G_{i}(r)}|Rm_{g_{i}}|^{2}_{g_{i}}\ dv_{g_{i}}=\int_{G_{r}}|Rm_{g_{\infty}^{r}}|^{2}_{g_{\infty}^{r}}\ dv_{g_{\infty}^{r}}.$$
On the other hand, since $r$ is arbitrary and by scale invariance,
$$\liminf_{i\to \infty}\int_{H_{i}(r)}|Rm_{g_{i}}|^{2}_{g_{i}}\ dv_{g_{i}}=\liminf_{i\to \infty}\int_{H_{i}(r_{i}r)}|Rm_{\tilde{g}_{i}}|^{2}_{\tilde{g}_{i}}\ dv_{\tilde{g}_{i}}\geq \sum_{k=1}^{|S|}\int_{Y_{k}}|Rm_{\overline{g}_{k}}|^{2}_{\overline{g}_{k}}dv_{\overline{g}_{k}},$$
which leads to the desired inequality.
\end{proof}

\noindent
We notice that in the case $n=3$ we have the compactness result as a corollary from the uniform boundedness of the Ricci curvature proved in Proposition $\ref{propric}$, that is

\begin{corollary}
The space $\mathcal{E}(D,c,K)$ is compact in dimension 3.
\end{corollary}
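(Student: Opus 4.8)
The plan is to reduce everything to the standard Cheeger--Gromov--Anderson convergence machinery, by observing that in dimension three the obstruction that forced the orbifold/bubbling analysis of the case $n=4$ disappears. The decisive point is that in dimension three the Weyl tensor vanishes identically, so the full Riemann tensor is a universal algebraic (pointwise, zeroth order) expression in the Ricci tensor, the scalar curvature and the metric; in an orthonormal frame it has the form
\[
Rm_{ijkl}=Ric_{ik}g_{jl}+Ric_{jl}g_{ik}-Ric_{il}g_{jk}-Ric_{jk}g_{il}-\frac{R_g}{2}\bigl(g_{ik}g_{jl}-g_{il}g_{jk}\bigr).
\]
Hence the uniform bound on $|Ric_g|$ already established in Proposition~\ref{propric}, together with the bound on $R_g$ coming from the $L^{\infty}$ bound on $\psi$, yields at once a \emph{uniform $L^{\infty}$ bound} on $|Rm_g|$ for every $(g,\psi)\in\mathcal{E}(D,c,K)$. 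This is precisely what was missing in dimension four, where only an $L^2$ bound on $Rm$ was available, and it is what removes the need both for the $\varepsilon$-regularity of Proposition~\ref{propreg} and for any rescaling/bubbling argument.

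With $|Rm_g|\le C$ we have a two-sided sectional curvature bound. Combining this with the diameter bound $\mathrm{diam}(M,g)\le D$ and the non-collapsing estimate $vol(B(p,r))\ge Cr^{3}$ of Proposition~\ref{propric}, Cheeger's injectivity radius estimate (as in \cite[Section~3]{Ch}) gives a \emph{uniform lower bound} $\mathrm{inj}(M,g)\ge i_0>0$. The bounded Ricci curvature together with this injectivity radius bound then provides, by Anderson's estimate, a uniform lower bound on the $C^{1,\alpha}$ harmonic radius. Consequently, given a sequence $(g_k,\psi_k)\in\mathcal{E}(D,c,K)$, the Cheeger--Gromov compactness theorem produces a subsequence of $(M,g_k)$ converging in the $C^{1,\alpha}$ topology to a limit metric $g_{\infty}$ on a fixed closed three-manifold; no points are removed, precisely because the curvature cannot concentrate.

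It then remains to upgrade this $C^{1,\alpha}$ convergence to $C^{\ell,\alpha}$ for every $\ell$ and to identify the limit spinor, which is a bootstrap in harmonic coordinates exactly as in the last step of the proof for $n=4$. The metric solves the quasilinear elliptic system $-\Delta_{g}g+\partial g*g=Ric_{g}$ with $Ric_{g}=\tfrac{R_g}{2}g+T^{g,\psi}$, while the spinor solves $-\Delta_{g}\psi+\tfrac{R_g}{2}\psi=\lambda^{2}\psi$ by the Schr\"odinger--Lichnerowicz formula~(\ref{Schrodinger-Lichnerowicz}). Feeding the $C^{1,\alpha}$ control of $g$ into the spinor equation gives $C^{2,\alpha}$ control of $\psi$, hence $C^{1,\alpha}$ control of $T^{g,\psi}$, which upgrades the metric to $C^{2,\alpha}$; iterating alternately on the two equations yields uniform $C^{\ell,\alpha}$ bounds for all $\ell$. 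Arzel\`a--Ascoli then extracts a further subsequence converging in $C^{\ell,\alpha}$ to $(g_{\infty},\psi_{\infty})$, and passing to the limit in the system~(\ref{eq1}) shows that $\psi_{\infty}$ is an Einstein spinor on $(M,g_{\infty})$. I expect no serious obstacle here: the only thing that genuinely must be verified is the non-collapsing/injectivity-radius lower bound, and this is already supplied by Proposition~\ref{propric}. The substance of the corollary is thus the single observation that in dimension three $|Ric|$ bounded forces $|Rm|$ bounded, which trivializes the hard part of the four-dimensional argument.
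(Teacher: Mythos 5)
Your proposal is correct and follows essentially the same route as the paper's own proof: the vanishing of the Weyl tensor in dimension three promotes the uniform Ricci bound of Proposition~\ref{propric} to a uniform $L^{\infty}$ bound on $Rm$ (making Proposition~\ref{propreg} globally automatic), which together with the diameter and volume bounds yields a uniform injectivity radius lower bound, after which the harmonic-coordinate bootstrap from the $n=4$ case gives smooth subconvergence without singular points. You merely make explicit the algebraic formula for $Rm$ in terms of $Ric$ and the elliptic iteration between the metric and spinor equations, which the paper leaves implicit.
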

\begin{proof}
The uniform bound on the Ricci curvature induces a uniform bound on the curvature tensor in dimension 3. Hence, one has automatically a global version of Proposition \ref{propreg}. Now combining this uniform bound with the diameter and the volume bound, provides a uniform lower bound on the injectivity radius.The limiting process is then similar to the one in dimension 4 and the desired result follows (see for instance \cite{And1}, \cite{BKN}, \cite{N}).
\end{proof}

\begin{remark} Let us make the following remarks:

\begin{itemize}

\item[i)] our proof does not require the manifold $M$ to be fixed. Indeed, one can take a sequence $(M_{k},g_{k},\psi_{k})$ and the limiting procedure still works under the same kind of bounds, with an additional upper bound on $b_{2}(M_{k})$, the second Betti number of $M_{k}$. The bound on $b_{2}$ allows to have a uniform upper bound on the $L^{2}$ norm of the curvature tensor.  The rest of the proof follows from Cheeger's finiteness theorem (see \cite{Ch1,Pet}). That is $\mathcal{E}(D,c,K)$ contains only a finite number of diffeomorphism classes;
\item[ii)] we can also take the $\lambda$ to be varying in a compact set $[a,b]$ with $a>0$. The case when $a=0$ is different in nature and was investigated in $\cite{KL}$;
\item[iii)] the constant $c$ in the definition of $\mathcal{E}$ is of a variational nature but it can be replaced by a non-collapsing geometric condition. That is $Vol_{g}M\geq c$;
\item[iv)] the introduction of the constant $K$, was necessary to get a bound on the gradient of the spinor and hence a uniform bound on the Ricci. But notice that one can impose another condition that would replace that bound. For instance if $P^{g}$ is the Penrose operator defined by $P^{g}_{X}\psi=\nabla_{X}\psi+\frac{1}{n}X\cdot D\psi$, one can impose that $|P^{g}\psi|\leq K$. This condition combined with the equation of the spinor (\ref{formulaspinor}) and the uniform bound of $\|\psi\|_{L^{\infty}}$ will lead to a uniform bound on $\|\nabla \psi\|_{L^{\infty}}$. In fact, one can rewrite the first equation of $(\ref{eq1})$ as follows:
$$Ric_g-\frac{R_g}{n}g=\tilde{T}^{g,\psi}_{ij},$$
where
$$\tilde{T}^{g,\psi}_{ij}=-\frac{1}{4}[\langle e_{i}\cdot P^{g}_{e_{j}}\psi,\psi\rangle +\langle e_{j}\cdot P^{g}_{e_{i}}\psi,\psi\rangle];$$
\item[v)] As we mentioned above, the bound $-\Delta_{g} R_{g}\geq -KR_{g}$ can be replaced by a more geometric condition in dimension 4. Namely, the $Q$-curvature $Q_{g}$ is bounded from below by $-K$. Indeed, recall that $Q_{g}=\frac{1}{6}(-\Delta_{g} R+R^{2}-3|Ric_{g}|^{2})$. Therefore, if $Q_{g}\geq -K$, then
$$-\Delta_{g}R \geq -6K-R^{2},$$
and this is enough to carry out the estimates needed in Proposition \ref{propric}.
\end{itemize}
\end{remark}


\section{Second Variation}

\noindent
In this section we want to compute the second variation of $E$ at a critical point $(g,\psi)$. For the sake of simplicity, in the sequel sometimes we will omit the dependance from $g$ and $\psi$ whenever there is no confusion. We first recall the variation of some geometric quantities under the perturbation of the metric. So we consider a variation of metrics $g(t)$ such that $\frac{\partial g(t)}{\partial t}_{|t=0}=h$. Then one has:

$$\left\{\begin{array}{ll}
\displaystyle\frac{\partial Ric_{ij}}{\partial t}_{|t=0}=-\frac{1}{2}\Big(\Delta_{L}h_{ij}+\nabla_{i}\nabla_{j} tr(h)+\nabla_{i}(\delta h)_{j}+\nabla_{j}(\delta h)_{i}\Big)\\
\\
\displaystyle\frac{\partial R}{\partial t}_{|t=0}=-\Delta(tr(h))+\delta^{2}h-\langle Ric, h\rangle,
\end{array}
\right.
$$
where $(\delta h)_{i}=-(div h)_{i}=-\nabla^{j}h_{ij}$, $\delta^{2}(h)=\sum_{i,j=1}^{n}\nabla^{i}\nabla^{j}h_{ij}$ and $\Delta_{L}$ is the Lichnerowicz Laplacia on 2-tensors. Now since we are dealing with variations involving spinors, we need an effective way of differentiation spinors. We state here the results developed in \cite{BGM} (we also refer the reader to \cite[Section~4.2]{AWW}). Consider the manifold $\tilde{M}=I\times M$, where $I$ is a small interval centered at zero, where the variation $g(t)$ is defined. On $\tilde{M}$ we define the metric $\tilde{g}=dt^{2}+g(t)$. We denote by $\nabla^{C}$ the Levi-Civita connection on  $(\tilde{M},\tilde{g})$ and "$\bullet$" the Clifford multiplication on $\Sigma \tilde{M}$. Then we have
\begin{equation}\label{bull}
X\cdot \psi =\nu \bullet X\bullet \psi,
\end{equation}
where $\nu$ is the normal vector to the $M$ as a submanifold of $\tilde{M}$. We can take $\nu=\frac{\partial}{\partial t}$. Therefore, if $X$ and $Y \in TM$, we have
\begin{equation}\label{vecbull}
\nabla^{C}_{X}Y=\nabla_{X}Y+\tilde{g}(W(X),Y)\nu,
\end{equation}
where $W$ is the Weingarten map, i.e. $W(x)=-\nabla_{X}^{C}\nu$. Now on the spinor bundle the covariant derivative can be expressed as follow for $\varphi \in \Sigma \tilde{M}_{|M}$ and $X\in TM$:
\begin{equation}\label{spinbull}
\nabla^{C}_{X}\varphi=\nabla_{X}\varphi-\frac{1}{2}\nu\bullet W(X)\bullet\varphi.
\end{equation}
In our case, as shown in \cite{BGM}, we have
\begin{equation}\label{meth}
\left\{\begin{array}{ll}
\tilde{g}(W(X),Y)=-\frac{1}{2}h(X,Y)\\
\\
Rm_{\tilde{g}}(X,Y,U,\nu)=\frac{1}{2}\Big( (\nabla_{Y}h)(X,U)-(\nabla_{X}h)(Y,U)\Big).
\end{array}
\right.
\end{equation}
Now, we are ready to compute the second variation of $E$.  Let us call
$$E_{1}(g,\psi)=Ric_{g}-\frac{R_{g}}{2}g-T^{g,\psi},$$
and
$$E_{2}(g,\psi)=D_{g}\psi-\lambda \psi.$$
We first compute the variation with respect to $\psi$ since it is easier. So we assume that $g$ is fixed and $\frac{\partial \psi}{\partial t}_{|t=0}=\varphi$. Then we have
$$\frac{\partial E_{1}}{\partial t}_{|t=0}=F(\psi,\varphi) \; ,$$
where the tensor $F$ is defined by
$$F(X,Y)=\frac{1}{4}\Big(\langle X\cdot \nabla_{Y}\varphi +Y\cdot \nabla_{X}\varphi,\psi\rangle+\langle X\cdot \nabla_{Y}\psi +Y\cdot \nabla_{X}\psi,\varphi\rangle\Big).$$
Also,
$$\frac{\partial E_{2}}{\partial t}_{|t=0}=D_{g}\varphi-\lambda\varphi.$$
Now we move to the variation in the $g$ direction as described above. We start by the term $\frac{\partial T_{ij}}{\partial t}$.
So let us consider again the quantity $S_{ij}=\langle e_{i}\cdot \nabla_{e_{j}}\psi,\psi\rangle$ and let us compute its variation. Then using the metric $\tilde{g}$ and assuming that the spinor $\psi$ is parallel transported along the variation and using the fact that $\nabla_{\nu}^{C}\nu=0$ and $(\ref{bull})$, we have

\begin{align}
\frac{\partial S_{ij}}{\partial t}_{|t=0}&=\nabla_{\nu}^{C}\langle e_{i}\cdot \nabla_{e_{j}}\psi,\psi\rangle_{g_{t}}=\langle e_{i}\cdot \nabla_{\nu}^{C}\nabla_{e_{j}}\psi,\psi\rangle_{g_{t}}.\notag
\end{align}
But from $(\ref{spinbull})$ we have
\begin{align}
\nabla_{\nu}^{C}\nabla_{e_{j}}\psi&=\nabla_{\nu}^{C}\Big(\nabla_{e_{j}}^{C}\psi+\frac{1}{2}\nu\bullet W(e_{j})\bullet \psi\Big)\notag\\
&=Rm_{\tilde{g}}(\nu,e_{j})\psi+\nabla^{C}_{[\nu,e_{j}]}\psi+\frac{1}{2}\nu\bullet \nabla^{C}_{\nu}W(e_{j})\bullet \psi\notag\\
&=Rm_{\tilde{g}}(\nu,e_{j})\psi+\nabla_{W(e_{j})}^{C}\psi+\frac{1}{2}\nu\bullet \nabla^{C}_{\nu}W(e_{j})\bullet \psi\\
&=Rm_{\tilde{g}}(\nu,e_{j})\psi+\nabla_{W(e_{j})}\psi+\frac{1}{2}\nu\bullet [\nabla^{C}_{\nu}W(e_{j})-W^{2}(e_{j})]\bullet \psi\notag\\
&=Rm_{\tilde{g}}(\nu,e_{j})\psi+\nabla_{W(e_{j})}\psi+\frac{1}{2}\nu\bullet [Rm_{\tilde{g}}(e_{j},\nu)\nu]\bullet\psi
\end{align}
where here we use the Riccati equation for the Weingarten map $(\nabla_{\nu}^{C}W)(X)=Rm_{\tilde{g}}(X,\nu)\nu+W^{2}(X)$. Now we have (with the convention that $e_{0}=\nu$),
$$Rm_{\tilde{g}}(\nu,e_{j})\psi=-\frac{1}{2}\sum _{0\leq p<\ell \leq n}Rm_{\tilde{g}}(e_{p},e_{\ell},\nu,e_{j})e_{p}\bullet e_{\ell} \bullet \psi$$
and $$Rm_{\tilde{g}}(e_{j},\nu)\nu\bullet \psi=Rm_{\tilde{g}}(e_{j},\nu,\nu,e_{k}) e_{k}\bullet\psi.$$
Hence
$$Rm_{\tilde{g}}(\nu,e_{j})\psi-\frac{1}{2}\nu\bullet [Rm_{\tilde{g}}(e_{j},\nu)\nu]\bullet\psi=-\frac{1}{2}\sum_{1\leq p<\ell\leq n}Rm_{\tilde{g}}(e_{p},e_{\ell},\nu,e_{j})e_{p}\bullet e_{\ell}\bullet \psi.$$
But using $(\ref{meth})$, we have
$$Rm_{\tilde{g}}(e_{p},e_{\ell},\nu,e_{j})=-\frac{1}{2}(\nabla_{\ell}(h)_{pj}-\nabla_{p}(h)_{\ell j}).$$
and $W(e_{j})_{k}=-\frac{1}{2}h_{jk}$. Therefore,
$$\nabla_{\nu}^{C}\nabla_{e_{j}}\psi=\frac{1}{4}\sum_{1\leq p<\ell \leq n}\Big((\nabla_{\ell}h)_{p j}-(\nabla_{p}h)_{\ell j}\Big)e_{p}\cdot e_{\ell}\cdot\psi-\frac{1}{2}h_{j k}\nabla_{e_{k}}\psi.$$
We finally notice that
$$\langle e_{i}\cdot\nabla_{W(e_{j})}\psi,\psi\rangle =-\frac{1}{2}h_{jk}\langle e_{i}\cdot \nabla_{e_{k}}\psi,\psi\rangle =-\frac{1}{2}(h \times S)_{ij},$$
where here we set $(A\times B)_{ij}=\sum_{k=1}^{n}A_{ik}B_{kj}$. Also, rewriting the term below
$$
\frac{1}{4}\langle e_{i}\cdot \sum_{1\leq p<q\leq n}\Big((\nabla_{p}h)_{qj}-(\nabla_{q}h)_{pj}\Big)e_{p}\cdot e_{q} \cdot \psi,\psi\rangle =\frac{1}{4}\sum_{p\not=q\not=i}(\nabla_{p}h)_{qj} \langle e_{i}\cdot e_{p} \cdot e_{q} \cdot \psi ,\psi\rangle,$$
We can define a $3$-tensor $Q$ by $Q(X,Y,Z)=\langle (X\wedge Y \wedge Z)\cdot \psi,\psi\rangle$ in a way that, we can write the later term as $\frac{1}{4} (\nabla h) \times Q$. Therefore, we have
$$\frac{\partial T_{ij}}{\partial t}_{|t=0}=-\frac{1}{2} (h\times T)_{ij}-\frac{1}{16} (\nabla h \times \tilde{Q})_{ij},$$
where $\tilde{Q}$ is the symmetrization of $Q$. But since $Q$ is totally skew-symmetric, we have that $\tilde{Q}=0$. Now, given a symmetric 2-tensor $h$, we define the operator $\mathcal{D}^{h}$ by
$$\mathcal{D}^{h}\varphi=\sum_{i,j=1}^n h_{ij}e_{i}\cdot \nabla_{e_{j}}\varphi.$$
With this notation we have
$$\frac{\partial D_{g}\varphi}{\partial t}_{|t=0}=-\frac{1}{2}\mathcal{D}^{h}\varphi+\frac{1}{4}\nabla(tr(h))\cdot \varphi -\frac{1}{4}\delta(h)\cdot \varphi.$$
To summarize the previous computations, if we denote by $\nabla \nabla tr(h)$ the tensor defined by $(\nabla \nabla tr(h))_{ij}=\nabla_{i}\nabla_{j} tr(h)$ and $sym(\nabla (\delta h))$ the symmetric tensor defined by $(sym(\nabla (\delta h)))_{ij}=\nabla_{i}(\delta h)_{j}+\nabla_{j}(\delta h)_{i}$, we write the second variation of $E$:

\begin{align}
\nabla^{2}E(g,\psi)[(h,\varphi),(h,\varphi)] &=\int_{M}\frac{1}{2}\langle \Delta_{L}h+\nabla \nabla tr(h)+sym(\nabla (\delta h)),h\rangle \notag\\
&+\frac{1}{2}\left( -\Delta tr(h)+\delta^{2}h-\langle Ric , h\rangle\right) tr(h) +\frac{R}{2}|h|^{2}\notag\\
&-\frac{1}{4}\langle \mathcal{D}^{h}\psi,\psi\rangle tr(h)+\frac{1}{2}\langle h\times T,h\rangle\notag\\
&+2\langle -\frac{1}{2}\mathcal{D}^{h}\psi+\frac{1}{4}\nabla tr(h)\cdot \psi-\frac{1}{4}\delta h \cdot \psi, \varphi \rangle\notag\\
&+2\langle F(\varphi,\psi),h\rangle +2\langle D\varphi-\lambda \varphi,\varphi \rangle \ dv.\notag
\end{align}

\noindent
We have
$$\langle F(\varphi,\psi),h\rangle=\frac{1}{2}\langle \mathcal{D}^{h}\psi,\varphi\rangle + \frac{1}{2}\langle \mathcal{D}^{h}\varphi,\psi\rangle$$
and
$$-\frac{1}{2}\langle \mathcal{D}^{h}\psi,\psi\rangle =\langle T,h\rangle.$$
This will allow us to finalize the proof of our result.
\begin{proof}(of Theorem \ref{secondvariationtransverse})
Let us take now a deformation of $g$ that is transverse to the diffeomorphism action, that is $\delta h=0$. Then in the previous formula we have
\begin{align}
\nabla^{2}E(g,\psi)[(h,\varphi),(h,\varphi)] &=\int_{M}\frac{1}{2}\langle \Delta_{L}h+\nabla \nabla tr(h),h\rangle \notag\\
&+\frac{1}{2}\Big( -\Delta tr(h)-\langle Ric_g , h\rangle\Big) tr(h) +\frac{R_g}{2}|h|^{2}\notag\\
&+\frac{1}{2}\langle T^{g,\psi},h\rangle tr(h)+\frac{1}{2}\langle h\times T^{g,\psi},h\rangle+\frac{1}{2}\langle\nabla tr(h)\cdot \psi, \varphi \rangle\notag\\
&+\langle \mathcal{D}^{h}\varphi,\psi\rangle+2\langle D_g\varphi-\lambda \varphi,\varphi \rangle\ dv.\notag
\end{align}
\end{proof}

\noindent
Now we move to the case of horizontal deformations.
\begin{proof}(of Corollary \ref{horizontaldeformation})

\noindent
A horizontal deformation is a deformation of the metric while the spinorial part is fixed. Namely, $\varphi=0$. First recall that
$$R_g=\frac{\lambda}{n-2}|\psi|^{2}.$$
Since the right hand side is invariant under the deformation of the metric, we have that the variation of $R_g$ is zero. Hence
$$-\Delta tr(h)-\langle Ric_g,h\rangle =0.$$
On the other hand, an Einstein-Dirac deformation, is defined by the vanishing of $\nabla^{2}E$. Hence,
$$\Delta_{L}h+\nabla \nabla tr(h)+R_g h+\langle T^{g,\psi},h \rangle g+h\times T^{g,\psi}=0$$
and if we take the trace of this last equation, we get
\begin{equation}\label{traceequation}
2\Delta tr(h)+R_gtr(h)+n\langle T^{g,\psi},h\rangle + \langle T^{g,\psi},h\rangle=0.
\end{equation}
But
$$\langle T^{g,\psi},h\rangle =\langle Ric_g-\frac{R_g}{2}g,h\rangle =-\Delta tr(h)-\frac{R_g}{2}tr(h).$$
Thus, equation (\ref{traceequation}) becomes
$$(1-n)\Delta tr(h)+\frac{R_g}{2}(1-n)tr(h)=0,$$
or equivalently
$$-\Delta tr(h)-\frac{R_g}{2}tr(h)=0.$$
Since $\langle Ric_g,h\rangle =-\Delta tr(h)$, the conclusion follows.
\end{proof}

\begin{remark} Some comments are in order:
\begin{itemize}
\item[i)] We notice that a conformal variation of the metric cannot be an Einstein-Dirac deformation. Indeed, if $h=ug$ then the second condition yields $$\lambda u|\psi|^{2}=0,$$
    hence if $u$ is smooth, then it must be zero.
\item[ii)] The functional $E$ that we are studying has $\lambda$ as a fixed parameter and the Euler-Lagrange equation asserts that $\lambda$ must be an eigenvalue for $D_{g}$ for all critical metrics $g$. In particular if one studies Einstein-Dirac deformations, one needs to take into consideration the stability of the eigenvalue $\lambda$. We recall from \cite[Theorem~24]{BG} that the variation of an eigenvalue of the Dirac operator under the metric reads as
    $$\frac{\partial \lambda}{\partial t}_{|t=0}=-\frac{1}{2}\int_{M}\langle T^{g,\psi},h\rangle\ dv.$$
    Hence, our perturbations $h$ need to satisfy the integral version of $\langle T^{g,\psi},h\rangle =0$.
\item[iii)] Let us go back to the case of general deformation and focus on the ones satisfying $\delta h =tr(h)=0$. This is motivated by the following: if $M$ is not diffeomorphic to the sphere then $T_{g}S^{2}(M)=C^{\infty}(M)g\oplus Im(\delta_{g}^{*})\oplus(\ker(\delta_{g})\cap tr_{g}^{-1}(0))$. The first factor corresponds to conformal changes of the metric and in this setting the problem was investigated in \cite{MV3, guimaamar,BM}. The second factor corresponds to the tangent space of the orbits of the group $Diff_{0}(M)$. So we want to focus on the third factor. Then we have
\begin{align}
\nabla^{2}E(g,\psi)[(h,\varphi),(h,\varphi)] &=\int_{M}\frac{1}{2}\langle \Delta_{L}h,h\rangle +\frac{R_g}{2}|h|^{2}\notag\\
&+\frac{1}{2}\langle h\times T^{g,\psi},h\rangle+ \langle \mathcal{D}^{h}\varphi,\psi\rangle +2\langle D_g\varphi-\lambda \varphi,\varphi \rangle\ dv.\notag
\end{align}

\end{itemize}
\end{remark}

\noindent


\section{Some examples}


\subsection{Example 1: real Killing spinors}

\noindent
We consider a real Killing spinor $\psi$, that is $\nabla_{X}\psi =-\mu X\cdot \psi$ for a real constant $\mu>0$, for all tangent vector $X$. In this case $g$ has to be Einstein and
$$D\psi=-\mu\sum_{i}e_{i}\cdot e_{i}\cdot \psi=n\mu \psi.$$
In particular $\lambda=n\mu$. Also
$$T_{ij}=-\frac{1}{4} [\langle -\mu e_{i}\cdot e_{j}\cdot \psi-\mu e_{j}\cdot e_{i}\cdot \psi,\psi\rangle]=\frac{1}{2}\mu |\psi|^{2}g_{ij},$$
$$R=4n(n-1)\mu^{2}=\frac{n\mu}{n-2}|\psi|^{2}.$$
Thus $|\psi|^{2}=4(n-1)(n-2)\mu$. Now we compute the terms in the second variation:
$$\langle T \times h,h\rangle=\frac{\mu |\psi|^{2}}{2}\langle g_{ik}h_{jk}.h\rangle=\frac{\mu |\psi|^{2}}{2}|h|^{2} =\frac{n-2}{2n}R|h|^{2}=2(n-1)(n-2)\mu^{2}|h|^{2},$$
and
$$\mathcal{D}^{h}\psi=-\mu\sum_{i,j}h_{ij}e_{i}\cdot e_{j}\cdot \psi=\mu tr(h)\psi  . $$
In particular if $tr(h)=0$ we have $\mathcal{D}^{h}\psi=0$. So the second variation reads
\begin{align}
\nabla^{2}E(g,\psi)[(h,\varphi),(h,\varphi)] &=\int_{M}\frac{1}{2}\langle \Delta_{L}h,h\rangle +\frac{R}{2}|h|^{2}+\frac{n-2}{4n}R|h|^{2}+2\langle D\varphi-\lambda \varphi,\varphi \rangle dv\notag\\
&=\int_{M}\frac{1}{2}\langle \Delta_{L}h,h\rangle +\frac{3n-2}{4n}R|h|^{2}+2\langle D\varphi-\lambda \varphi,\varphi \rangle dv .\notag
\end{align}


\subsection{Example 2: Quasi-Killing spinors}

\noindent
Now we consider a Sasakian spin-manifold $(M^{2m+1},g,\phi, \eta)$. A quasi-Killing spinor satisfies
$$\nabla_{X}\psi=aX\cdot \psi+b\eta(X)\xi \cdot \psi,$$
where $\xi$ is the Reeb vector field of $\eta$. We recall from \cite[Theorem 6.6.]{Kim} that if $M$ is simply connected with
$$Ric=\frac{2-m}{m-1}g+\frac{2m^{2}-m-2}{m-1}\eta\otimes \eta,$$
then it does carry a WK-spinor (weak Killing spinor) built from quasi-Killing spinors with
$$a=\pm \frac{1}{2}, \quad b=\mp \frac{2m^{2}-m-2}{4(m-1)}.$$
So let us compute the second variation at such solutions. For the sake of simplicity we will do the computations for $a=\frac{1}{2}$ and $b=-\frac{2m^{2}-m-2}{4(m-1)}$. Notice that in this case we have $R=\frac{2m}{m-1}$. Also, if $\psi$ is an $(a,b)$-quasi-Killing spinor then it is an eigenspinor of the Dirac operator with eigenvalue $\lambda=-(2m+1)a-b$. We calculate now the values of the tensor $T$:
$$T_{ij}=-2g_{ij}+\frac{2m^{2}-m-2}{m-1}\eta\oplus\eta.$$
Also:
$$\mathcal{D}^{h}\psi=\sum_{i,j}h_{ij}e_{i}\cdot \nabla_{j}\psi=b\langle h(\xi)\cdot \xi \cdot \psi,\psi\rangle,$$
where we considered $h$ as an endomorphism, that is $h(\xi)=\sum_{i=1}^{2m+1}h_{i,2m+1}e_{i}$. Now, we compute
$$\langle h \times T,h\rangle =-2|h|^{2}+\frac{2m-m-2}{m-1}|h(\xi)|^{2}.$$
Replacing it in the formula for the second variation of $E$, it yields

\begin{align}
\nabla^{2}E(g,\psi)[(h,\varphi),(h,\varphi)] &=\int_{M}\frac{1}{2}\langle \Delta_{L}h,h\rangle +\frac{m+1}{m-1}|h|^{2}+\frac{2m^{2}-m-2}{2(m-1)}|h(\xi)|^{2}\notag\\
&-\frac{2m^{2}-m-1}{4(m-1)} \langle h(\xi)\cdot\xi \cdot \psi,\varphi\rangle +2\langle D\varphi-\lambda \varphi,\varphi \rangle\ dv.\notag
\end{align}
One might notice the difference here compared to the case of real Killing spinors: if one wants to consider variations that preserves the associate metric to the contact structure $\eta$, then we necessarily have $h(\xi)=0$ and the second variation takes a similar form as the one of a Killing spinor.

\end{document}